 \newtheorem{proposition}{Proposition}[section]
\newtheorem{theorem}{Theorem}[section]
\newtheorem{lemma}{Lemma}[section]
\newtheorem{corollary}{Corollary}[section]
\newtheorem{example}{Example}[section]
\theoremstyle{definition}
\newtheorem{definition}{Definition}[section]
\newtheorem{remark}{Remark}[section]
\begin{document}

\title[Monotone-light factorization for 2-categories]{The monotone-light factorization for 2-categories via 2-preorders}

 \author[Jo\~{a}o J. Xarez]{Jo\~{a}o J. Xarez}



 \address{CIDMA - Center for Research and Development in Mathematics and Applications,
Department of Mathematics, University of Aveiro, Portugal.}

 \email{xarez@ua.pt}

 \subjclass[2020]{18A32,18E50,18N10}

 \keywords{Monotone-light factorization, 2-categories}


 \begin{abstract}
It is shown that the reflection $2Cat\rightarrow 2Preord$ of the category of all 2-categories into the category of 2-preorders determines a monotone-light factorization system on $2Cat$ and that the light morphisms are precisely the 2-functors faithful on 2-cells with respect to the vertical structure. In order to achieve such result it was also proved that the reflection $2Cat\rightarrow 2Preord$ has stable units, a stronger condition than admissibility in categorical Galois theory, and that 2-functors surjective both on horizontally composable triples of vertically composable pairs and on vertically composable triples of horizontally composable pairs of 2-cells are effective descent morphisms in $2Cat$.
\end{abstract}

\maketitle

\section{Introduction}\label{sec-introduction}

\subsection{The process of stabilization and localization}\label{sec-intro1}

The classical monotone-light factorization is due to S. Eilenberg \cite{Eil:trans} and G. T. Whyburn \cite{Why:trans}. It consists of a pair $(\mathcal{E^{'}},\mathcal{M^{*}})$ of classes of morphisms in the category $CompHaus$ of compact Hausdorff spaces, such that any continuous map $f$ of compact Hausdorff spaces factorizes as $f=m\circ e$, with $e\in\mathcal{E^{'}}$ (the class of monotone maps, i.e., those who have totally disconnected fibres) and $m\in\mathcal{M^{*}}$ (the class of light maps, i.e., those whose fibres are connected).

This factorization system $(\mathcal{E^{'}},\mathcal{M^{*}})$ may be obtained from another (reflective) factorization system $(\mathcal{E},\mathcal{M})$ on $CompHaus$ by a process of simultaneous stabilization and localization. That is: beginning with the reflector $I:CompHaus\rightarrow Stone$ (the category of Stone spaces), whose right adjoint $H$ is a full inclusion, there is an associated (reflective) factorization system $(\mathcal{E},\mathcal{M})$; then, we take the largest subclass $\mathcal{E^{'}}$ of $\mathcal{E}$ which is stable under pullbacks, and take $\mathcal{M^{*}}$ to be the class of morphisms which are obtained by localizing $\mathcal{M}$ ($m\in\mathcal{M^{*}}$ if there is any surjective map $p$ such that the pullback $p^*(m)$ along $p$ is in $\mathcal{M}$).

The process explained in the last paragraph was studied in general in \cite{CJKP:stab}, beginning with a reflection $H\vdash I:\mathbb{C}\rightarrow \mathbb{X}$, and the surjective maps $p$ used in the localization were generalized to morphisms $p:E\rightarrow B$ such that the pullback functor $p^{*}:\mathbb{C}/B\rightarrow\mathbb{C}/E$ is monadic, called effective descent morphisms (e.d.m.)\ in Grothendieck (basic-fibrational) descent theory.

In that paper \cite{CJKP:stab} some examples were given in which this process actually achieves a new factorization system $(\mathcal{E^{'}},\mathcal{M^{*}})$ (non-trivial because $\mathcal{E^{'}}\neq\mathcal{E}\Leftrightarrow\mathcal{M}\neq\mathcal{M^{*}}$). This is a quite rare phenomenon, since the pair $(\mathcal{E^{'}},\mathcal{M^{*}})$ usually fails to be a factorization system. When $(\mathcal{E^{'}},\mathcal{M^{*}})$ is in fact a factorization system (whose definition can be found in \cite{def_fact_sys}, and nicely exposed with other insights in \cite{CJKP:stab}), obtained by simultaneous stabilization and localization, it is to be called monotone-light as in \cite{CJKP:stab}, paying tribute to that first example referred to above.

An interesting feature of this process is its connection to categorical Galois theory (see \cite{G. Janelidze}). If the reflection $I\dashv H$ is semi-left-exact (in the sense of \cite{CHK:fact}), also called admissible in categorical Galois theory, then $\mathcal{M^{*}}$ is the class of coverings in the sense of that theory (being $Spl(E,p)$ the full subcategory of $\mathbb{C}/B$, determined by the coverings over $B$ split by the monadic extension $(E,p)$, the fundamental theorem of categorical Galois theory says that $Spl(E,p)$ has an algebraic description).

\subsection{Past and present work}\label{sec-intro2}
In \cite{X:ml}, it was presented a new non-trivial example of the process above, for the reflection $Cat\rightarrow Preord$ of the category of all categories into the category of all preordered sets, where the coverings are the faithful functors.

Now, in this paper, it will be proved that also the reflection $2Cat\rightarrow 2Preord$ of the category of all 2-categories into the category of all 2-preorders has a non-trivial monotone-light factorization, where the coverings are the 2-functors which are faithful vertically with respect to 2-cells.

Notice that both reflections have stable units (in the sense of \cite{CHK:fact}; a stronger condition than semi-left-exactness), which is crucial to the proof in association with the fact that there are enough e.d.m.\ with domain in the subcategory.

The needed characterization of e.d.m.\ in $2Cat$ is given in this paper, obtained in a completely analogous way as the characterization of e.d.m.\ in $Cat$ was done in \cite{JST:edm}. These characterizations depend on the embedding of $Cat$ and $2Cat$ in the obvious presheaf categories.

\subsection{Future work: prospects}\label{sec-intro3}

This paper is intended to be a first step in showing that the monotone-light factorization in \cite{X:ml} can be extended to higher category theory.

In particular, we hope to achieve the same results for $n$-categories in general (and to $\omega$-categories) in a similar way, embedding $n$-categories in categories of presheaves, if feasible.

We also believe, for the moment, that the good context for extending our results to higher category theory will be that of $\mathcal{V}$-categories. Remark that, considering $\mathcal{V}=Set$ the category of sets and then iterating one obtains $n$-categories. Of course, this opens the possibility of doing so for $\mathcal{V}$ other than the category of sets. We would be very interested, for instance, to apply these future results to the following open problem: is there a monotone-light factorization for semigroups via semilattices (see \cite{Sgr->Slat} and \cite{IsabelPhD})?

Remark finally that, because of the characterization given in this paper for the e.d.m in $2Cat$ (cf.\ \ref{proposition:EDM(2Cat)}), we are driven to present the following conjecture about the nature of e.d.m.\ for $n$-categories ($n > 1$), which may be helpful in related future work.

\textbf{Conjecture}: an $n$-functor is an e.d.m.\ in the category of all $n$-categories if and only if it is surjective both on horizontally composable triples of vertically composable pairs and on vertically composable triples of horizontally composable pairs of k-cells, for every $k\in\{2,...,n\}$.

Similarly, we could present other obvious conjectures. Some concerning the classes of morphisms of categorical Galois theory characterized in this paper, and even of other important classes of morphisms not treated here, since we were not exhaustive.

\section{The category of all 2-categories}\label{sec:2Cat}

Consider the category $2Cat$, with objects all 2-categories and whose morphisms are the 2-functors (see \cite[\S XII.3]{SM:cat}). Its definition is going to be stated in a way that suits our purposes. In order to do so, some intermediate structures need to be defined first.

First, consider the category $\mathbb{P}$ generated by the following \textit{precategory diagram},\\
\begin{picture}(200,40)(0,0)
\put(60,0){$P_2$}
\put(80,20){\vector(1,0){40}}\put(80,5){\vector(1,0){40}}\put(80,-10){\vector(1,0){40}}
\put(95,-5){$r$}\put(95,8){$m$}\put(95,25){$q$}
\put(125,0){$P_1$}
\put(140,20){\vector(1,0){40}}\put(180,5){\vector(-1,0){40}}\put(140,-10){\vector(1,0){40}}
\put(155,-5){$c$}\put(155,8){$e$}\put(155,25){$d$}
\put(185,0){$P_0$}
\end{picture}\\

\noindent in which

$d\circ e=1_{P_0}=c\circ e,\ d\circ m =d\circ q,\ c\circ m =c\circ r\ and\ c\circ q = d\circ r,$

\noindent where $1_{P_0}$ stands for the identity morphism of $P_0$ (see \cite[\S 4.1]{CJKP:stab}).

A \textit{precategory} is an object in the category of presheaves $\hat{\mathbb{P}}=Set^\mathbb{P}$, that is, any functor $P:\mathbb{P}\rightarrow Set$ to the category of sets.

If \begin{picture}(100,40)(0,0)
\put(60,0){$Q_2$}
\put(80,20){\vector(1,0){40}}\put(80,5){\vector(1,0){40}}\put(80,-10){\vector(1,0){40}}
\put(95,-5){$r'$}\put(95,8){$m'$}\put(95,25){$q'$}
\put(125,0){$Q_1$}
\put(140,20){\vector(1,0){40}}\put(180,5){\vector(-1,0){40}}\put(140,-10){\vector(1,0){40}}
\put(155,-5){$c'$}\put(155,8){$e'$}\put(155,25){$d'$}
\put(185,0){$Q_0$}
\end{picture}\\

\noindent is another precategory diagram, then a triple $(f_2,f_1,f_0)$ with $f_2:P_2\rightarrow Q_2$, $f_1:P_1\rightarrow Q_1$ and $f_0:P_0\rightarrow Q_0$, will be called a \textit{precategory morphism diagram} provided the following equations hold: $f_0\circ d=d'\circ f_1,\ f_0\circ c=c'\circ f_1,\ f_1\circ e=e'\circ f_0,\ f_1\circ q=q'\circ f_2, f_1\circ m=m'\circ f_2,\ f_1\circ r=r'\circ f_2.$\\

Secondly, consider the category $2\mathbb{P}$ generated by the following \textit{2-precategory diagram},

\begin{picture}(200,120)

\put(0,70){$hvP_2$}
\put(55,90){\vector(1,0){70}}\put(55,75){\vector(1,0){70}}\put(55,60){\vector(1,0){70}}
\put(70,65){$hr\times hr$}\put(70,78){$hm\times hm$}\put(70,95){$hq\times hq$}
\put(148,70){$vP_2$}
\put(185,90){\vector(1,0){70}}\put(255,75){\vector(-1,0){70}}\put(185,60){\vector(1,0){70}}
\put(200,65){$hc\times hc$}\put(200,78){$he\times he$}\put(200,95){$hd\times hd$}
\put(260,70){$P_0$}

\put (-55,35){$vr\times vr$}\put (-15,60){\vector(0,-1){40}}\put (-12,35){$vm\times vm$}\put (15,60){\vector(0,-1){40}}\put (42,35){$vq\times vq$}\put (40,60){\vector(0,-1){40}}

\put (120,35){$vr$}\put (135,60){\vector(0,-1){40}}\put (140,35){$vm$}\put (160,60){\vector(0,-1){40}}\put (165,35){$vq$}\put (180,60){\vector(0,-1){40}}

\put (270,35){$1_{P_0}$}\put (265,60){\vector(0,-1){40}}

\put(0,0){$hP_2$}
\put(55,20){\vector(1,0){70}}\put(55,5){\vector(1,0){70}}\put(55,-10){\vector(1,0){70}}
\put(80,-5){$hr$}\put(80,8){$hm$}\put(80,25){$hq$}
\put(148,0){$2P_1$}
\put(185,20){\vector(1,0){70}}\put(255,5){\vector(-1,0){70}}\put(185,-10){\vector(1,0){70}}
\put(210,-5){$hc$}\put(210,8){$he$}\put(210,25){$hd$}
\put(260,0){$P_0$}\put(300,0){$(2.1)$}

\put (-55,-35){$vc\times vc$}\put (-15,-10){\vector(0,-1){40}}
\put (-9,-35){$ve\times ve$}\put (15,-50){\vector(0,1){40}}
\put (42,-35){$vd\times vd$}\put (40,-10){\vector(0,-1){40}}

\put (120,-35){$vc$}\put (135,-10){\vector(0,-1){40}}
\put (140,-35){$ve$}\put (160,-50){\vector(0,1){40}}
\put (165,-35){$vd$}\put (180,-10){\vector(0,-1){40}}

\put (270,-35){$1_{P_0}$}\put (265,-10){\vector(0,-1){40}}

\put(0,-70){$P_2$}
\put(55,-50){\vector(1,0){70}}\put(55,-65){\vector(1,0){70}}\put(55,-80){\vector(1,0){70}}
\put(80,-75){$r$}\put(80,-62){$m$}\put(80,-45){$q$}
\put(148,-70){$P_1$}
\put(185,-50){\vector(1,0){70}}\put(255,-65){\vector(-1,0){70}}\put(185,-80){\vector(1,0){70}}
\put(210,-75){$c$}\put(210,-62){$e$}\put(210,-45){$d$}
\put(260,-70){$P_0$\hspace{10pt,}}

\end{picture}\\\\\\\\\\\\\\

\noindent in which:
\begin{itemize}
\item each one of the three horizontal diagrams (upwards, $P$, $hP$ and $hvP$) is a precategory diagram;

\item each one of the three vertical diagrams (from the left to the right, $vhP$, $vP$ and the trivial $P_0$) is a precategory diagram;

\item $(vc\times vc,vc,1_{P_0})$, $(ve\times ve,ve,1_{P_0})$, $(vd\times vd,vd,1_{P_0})$, $(vr\times vr,vr,1_{P_0})$, $(vm\times vm,vm,1_{P_0})$, $(vq\times vq,vq,1_{P_0})$ are all six precategory morphism diagrams (equivalently, $(hq\times hq,hq,q)$, $(hm\times hm,hm,m)$, $(hr\times hr,hr,r)$, $(hd\times hd,hd,d)$, $(he\times he,he,e)$, $(hc\times hc,hc,c)$ are all six precategory morphism diagrams).
\end{itemize}

Notice that the names given to objects and morphisms in $(2.1)$ are arbitrary, being so chosen in order to relate to the following last definition of section \ref{sec:2Cat} (for instance, $vq\times vq$ will denote the morphism uniquely determined by a pullback diagram).\\

The category $2Cat$ of all 2-categories is the full subcategory of $\hat{2\mathbb{P}}=Set^{2\mathbb{P}}$, determined by its objects $C:2\mathbb{P}\rightarrow Set$ such that the image by $C$ of each horizontal and vertical precategory diagram in $(2.1)$ is a category. That is, for instance, in the case of the bottom horizontal precategory diagram in $(2.1)$:

the commutative square

\begin{picture}(100,80)
\put (30,0){$C(P_1)$}\put (30,50){$C(P_2)$}\put (135,0){$C(P_0)$}\put (135,50){$C(P_1)$}
\put (30,25){$Cr$}\put (90,60){$Cq$}
\put(160,25){$Cc$}\put(200,25){$(2.2)$}
\put(90,10){$Cd$}
\put(70,3){\vector(1,0){58}}\put(70,55){\vector(1,0){58}}
\put(50,45){\vector(0,-1){33}}\put (155,45){\vector(0,-1){33}}
\end{picture}\\

\noindent is a pullback diagram in $Set$;

the associative and unit laws hold for the operation $Cm$, that is, the following respective diagrams commute in $Set$,

\begin{picture}(100,80)
\put (30,0){$C(P_2)$}\put (0,50){$C(P_2)\times_{C(P_1)}C(P_2)$}\put (165,0){$C(P_1)$,}\put (165,50){$C(P_2)$}
\put (-5,25){$Cr\times Cm$}\put (100,60){$Cm\times Cq$}
\put(190,25){$Cm$\hspace{20pt}$(2.3)$}
\put(100,10){$Cm$}
\put(70,3){\vector(1,0){88}}\put(103,55){\vector(1,0){55}}
\put(50,45){\vector(0,-1){33}}\put (185,45){\vector(0,-1){33}}
\end{picture}\\

\begin{picture}(300,80)

\put (-60,50){$C(P_0)\times_{C(P_0)}C(P_1)$}\put (20,45){\vector(0,-1){30}}\put(42,55){\vector(1,0){70}}
\put(230,55){\vector(-1,0){70}}

\put(0,0){$C(P_1)$}\put (25,25){$pr_2$}
\put(42,3){\vector(1,0){70}}
\put(50,60){$Ce\times1_{C(P_1)}$}\put(60,10){$1_{C(P_1)}$}

\put (120,0){$C(P_1)$}\put (120,50){$C(P_2)$}\put (245,0){$C(P_1)$\hspace{10pt.}}\put (235,50){$C(P_1)\times_{C(P_0)}C(P_0)$}

\put (145,25){$Cm$}\put (170,60){$1_{C(P_1)}\times Ce$}\put(270,25){$pr_1$\hspace{20pt}$(2.4)$}
\put(190,10){$1_{C(P_1)}$}

\put(230,3){\vector(-1,0){70}}
\put(140,45){\vector(0,-1){30}}\put (265,45){\vector(0,-1){30}}
\end{picture}\\

It would be a long and trivial calculation to check that there is an isomorphism between the category of all 2-categories (in the sense of \cite[\S XII.3]{SM:cat}) and the full subcategory of $\hat{2\mathbb{P}}$ just defined. Notice that: the requirement that the horizontal composite of two vertical identities is itself a vertical identity is encoded in diagram $(2.1)$ in the commutativity of the square $hm\circ (ve\times ve)=ve\circ m$ ; the interchange law, which relates the vertical and the horizontal composites of 2-cells, is encoded in diagram $(2.1)$ in the commutativity of the square $vm\circ (hm\times hm)=hm\circ (vm\times vm)$.

\section{Internal categories and limits}\label{sec:Internal categories and limits}
In section \ref{sec:2Cat}, if the category $Set$ of sets is replaced by any category $\mathcal{C}$ with pullbacks, then one obtains the definition of $2Cat(\mathcal{C})$, the category of internal 2-categories in $\mathcal{C}$.

In this section \ref{sec:Internal categories and limits}, the goal is to show that the category of all 2-categories $2Cat$ is closed under limits in the presheaves category $\hat{2\mathbb{P}}=Set^{2\mathbb{P}}$. The following Lemmas \ref{lemma:pullbacks internal cats} and \ref{lemma:products internal cats} give some well known facts about limits of internal categories, which will translate into internal 2-categories, and finally into 2-categories in the special case of $\mathcal{C}=Set$.

In what follows, $Cat(\mathcal{C})$ will denote the category of internal categories in $\mathcal{C}$, that is, the full subcategory of the category of functors $\mathcal{C}^\mathbb{P}$, determined by all the functors $C:\mathbb{P}\rightarrow \mathcal{C}$ such that the diagram $(2.2)$ is a pullback diagram in $\mathcal{C}$ and the diagrams $(2.3)$ and $(2.4)$ commute in $\mathcal{C}$ ($\mathbb{P}$ is of course the category defined in section \ref{sec:2Cat}).

\begin{lemma}\label{lemma:pullbacks internal cats}
Let $\mathcal{C}$ be a category with pullbacks.

Then, $Cat(\mathcal{C})$ is closed under pullbacks in $\mathcal{C}^\mathbb{P}$, where pullbacks exist and are calculated pointwise.
\end{lemma}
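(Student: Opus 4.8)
The plan is to reduce the statement to a pointwise check. Since $\mathcal{C}^{\mathbb{P}}$ is a functor category into a category with pullbacks, all pullbacks in $\mathcal{C}^{\mathbb{P}}$ exist and are computed objectwise; that is, given a cospan $C \to E \leftarrow D$ in $Cat(\mathcal{C}) \subseteq \mathcal{C}^{\mathbb{P}}$, the pullback $P = C \times_E D$ in $\mathcal{C}^{\mathbb{P}}$ has $P(P_i) = C(P_i) \times_{E(P_i)} D(P_i)$ for $i = 0,1,2$, with structure maps induced componentwise. The only thing to verify is that this presheaf $P$ actually lands in the full subcategory $Cat(\mathcal{C})$, i.e. that $P$ satisfies the pullback condition $(2.2)$ and the commutativity of $(2.3)$ and $(2.4)$; once that is known, $P$ is automatically the pullback in $Cat(\mathcal{C})$ as well, because $Cat(\mathcal{C})$ is full in $\mathcal{C}^{\mathbb{P}}$ and the universal property is inherited.

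First I would set up notation for the componentwise pullback and record that $P(P_1) = C(P_1)\times_{E(P_1)}D(P_1)$, $P(P_2) = C(P_2)\times_{E(P_2)}D(P_2)$, with $Pd, Pc, Pe, Pm, Pq, Pr$ the maps induced by the universal property of these pullbacks from the corresponding maps of $C$, $D$, $E$. Then the key step is to check that the square $(2.2)$ for $P$ is a pullback in $\mathcal{C}$. This follows from a standard ``pullback of pullbacks'' / pasting argument: $P(P_2)$ is, on the one hand, $C(P_2)\times_{E(P_2)}D(P_2)$, and on the other hand one computes that $P(P_1)\times_{P(P_0)}P(P_1)$ equals
\begin{equation*}
\bigl(C(P_1)\times_{E(P_1)}D(P_1)\bigr)\times_{\,C(P_0)\times_{E(P_0)}D(P_0)\,}\bigl(C(P_1)\times_{E(P_1)}D(P_1)\bigr),
\end{equation*}
and a diagram chase using that $(2.2)$ holds for each of $C$, $D$, $E$ shows these two objects are canonically isomorphic over $P(P_1)$ via the comparison map, so $(2.2)$ holds for $P$. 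Concretely this is an instance of the fact that limits commute with limits: $P(P_2)$ is a limit over a diagram built from pullbacks, and reassociating the limit gives the fibre product $P(P_1)\times_{P(P_0)}P(P_1)$.

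For $(2.3)$ and $(2.4)$ the argument is purely formal: a morphism into a pullback object is determined by its two composites with the projections, so to check that two parallel morphisms into $P(P_1)$ (built from $Pm$, $Pe$, identities and projections) agree, it suffices to post-compose with the projections $P(P_1)\to C(P_1)$ and $P(P_1)\to D(P_1)$ and invoke the corresponding identities $(2.3)$, $(2.4)$ for $C$ and for $D$. I expect the main obstacle — if there is one at all — to be purely bookkeeping: correctly identifying the iterated pullback $P(P_2) \cong P(P_1)\times_{P(P_0)}P(P_1)$ and keeping track of which projection goes where, since the comparison isomorphism in $(2.2)$ must be shown to be compatible with all of $Pd, Pc, Pq, Pr$ simultaneously. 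Everything else is a routine consequence of the universal property of pullbacks together with the hypotheses holding pointwise; no genuinely new idea is needed, which is why the lemma is stated as ``well known''.
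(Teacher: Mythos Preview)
Your proposal is correct and is exactly the standard argument one would expect. The paper itself does not give a proof of this lemma at all: it is stated as a ``well known fact'' about internal categories and left unproved, so there is nothing further to compare.
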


\begin{lemma}\label{lemma:products internal cats}
Let $\mathcal{C}$ be a category with pullbacks.

If $\mathbb{I}$ is a discrete category (that is, a set) and $\mathcal{C}$ has all limits $\mathbb{I}\rightarrow \mathcal{C}$, then $Cat(\mathcal{C})$ is closed under all limits $\mathbb{I}\rightarrow Cat(\mathcal{C})$ in $\mathcal{C}^\mathbb{P}$, where limits $\mathbb{I}\rightarrow \mathcal{C}^\mathbb{P}$ exist and are calculated pointwise.
\end{lemma}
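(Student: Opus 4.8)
The plan is to reduce the statement about internal categories to a statement about the base category $\mathcal{C}$, exploiting the fact that $Cat(\mathcal{C})$ is defined by equational and limit conditions imposed pointwise along $\mathbb{P}$. First I would recall that in the presheaf category $\mathcal{C}^{\mathbb{P}}$ all limits that exist in $\mathcal{C}$ are computed pointwise; in particular, given a set $\mathbb{I}$ and a diagram $D\colon\mathbb{I}\rightarrow Cat(\mathcal{C})$, the limit $L=\lim D$ in $\mathcal{C}^{\mathbb{P}}$ is the functor with $L(P_0)=\prod_{i}D_i(P_0)$, $L(P_1)=\prod_i D_i(P_1)$, $L(P_2)=\prod_i D_i(P_2)$, and structure maps the corresponding products of structure maps. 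The only thing to verify is that this pointwise limit $L$ again satisfies the defining conditions of an internal category, i.e.\ that diagram $(2.2)$ is a pullback in $\mathcal{C}$ and that $(2.3)$ and $(2.4)$ commute in $\mathcal{C}$.

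For the equational conditions $(2.3)$ and $(2.4)$ there is nothing to do: a product (more generally any limit) of a family of commutative squares is again a commutative square, because the projections are jointly monic and each leg of $L$'s square, post-composed with a projection $\pi_i$, recovers the corresponding leg of $D_i$'s square, which commutes by hypothesis. So $Lm=L(m)$ inherits associativity and the unit laws from the $D_i(m)$. The substantive point is the pullback condition $(2.2)$: one must show that $\prod_i\big(D_i(P_2)\big)$, together with the product maps $\prod_i D_i(q)$ and $\prod_i D_i(r)$, is a pullback of $\prod_i D_i(c)$ along $\prod_i D_i(d)$. This is exactly the assertion that an $\mathbb{I}$-indexed product of pullback squares is a pullback square, which holds in any category with the relevant limits because both products and pullbacks are limits and limits commute with limits; concretely, $\prod_i\big(D_i(P_1)\times_{D_i(P_0)}D_i(P_1)\big)\cong\big(\prod_i D_i(P_1)\big)\times_{\prod_i D_i(P_0)}\big(\prod_i D_i(P_1)\big)$ via the obvious comparison map, and one checks this isomorphism is compatible with the structure maps so that $L$ is genuinely the pointwise limit in $\mathcal{C}^{\mathbb{P}}$.

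Having shown $L$ lies in $Cat(\mathcal{C})$, I would then note that the universal property is automatic: $L$ is a limit of $D$ in $\mathcal{C}^{\mathbb{P}}$ and $Cat(\mathcal{C})$ is a full subcategory containing $L$ and all the $D_i$, so $L$ is also the limit of $D$ in $Cat(\mathcal{C})$. This is the standard ``reflective/closed subcategory inherits limits'' argument and requires only fullness of the inclusion $Cat(\mathcal{C})\hookrightarrow\mathcal{C}^{\mathbb{P}}$, which holds by definition.

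The main obstacle, such as it is, is purely bookkeeping: making precise that the pointwise product of the pullback squares $(2.2)$ is again a pullback square, and that all the comparison isomorphisms (for the pullbacks $\mathcal{C}(P_2)\cong \mathcal{C}(P_1)\times_{\mathcal{C}(P_0)}\mathcal{C}(P_1)$ appearing implicitly in $(2.3)$ and $(2.4)$) are respected by the limiting cone. I expect no genuine difficulty here because $\mathbb{I}$ is discrete, so every limit in sight is a product and the interchange ``products commute with pullbacks'' is elementary; the hypothesis that $\mathcal{C}$ has pullbacks guarantees $(2.2)$ makes sense pointwise in $\mathcal{C}^{\mathbb{P}}$ and the hypothesis that $\mathcal{C}$ has $\mathbb{I}$-indexed limits guarantees the products exist. (Indeed Lemma~\ref{lemma:pullbacks internal cats} is the $\mathbb{I}=\,$``cospan'' analogue of the same argument, so the two proofs run in parallel.)
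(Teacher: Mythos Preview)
Your argument is correct and is the standard one: compute the $\mathbb{I}$-indexed product pointwise in $\mathcal{C}^{\mathbb{P}}$, observe that the equational conditions $(2.3)$ and $(2.4)$ are preserved because a product of commuting diagrams commutes, and observe that the pullback condition $(2.2)$ is preserved because products commute with pullbacks (limits commute with limits); fullness of the inclusion then transfers the universal property. The paper does not actually prove this lemma---it introduces Lemmas~\ref{lemma:pullbacks internal cats} and~\ref{lemma:products internal cats} as ``well known facts about limits of internal categories'' and states them without proof---so your proposal simply supplies the expected argument that the paper leaves to the reader.
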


\begin{corollary}\label{corollary:limits 2Cat}

If $\mathcal{C}$ has all limits then $2Cat(\mathcal{C})$ is closed under limits in the functor category $\mathcal{C}^{2\mathbb{P}}$, where all limits exist and are calculated pointwise.

In particular, for $\mathcal{C}=Set$, $2Cat$ is closed under limits in $\hat{2\mathbb{P}}=Set^{2\mathbb{P}}$.
\end{corollary}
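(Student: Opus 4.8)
The plan is to reduce the statement to the two preceding lemmas on $Cat(\mathcal{C})$ together with the standard facts about limits in functor categories. First, since $\mathcal{C}$ has all limits, both $\mathcal{C}^{2\mathbb{P}}$ and $\mathcal{C}^{\mathbb{P}}$ have all limits, computed pointwise; this requires no argument. So it suffices to take an arbitrary small diagram $D\colon\mathbb{I}\to 2Cat(\mathcal{C})$, form its pointwise limit $L$ in $\mathcal{C}^{2\mathbb{P}}$, and check that $L$ lands in the full subcategory $2Cat(\mathcal{C})$ — fullness then automatically makes $L$ the limit of $D$ inside $2Cat(\mathcal{C})$ as well.

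Next I would make explicit the ``conjunction of conditions'' picture of $2Cat(\mathcal{C})$ implicit in Section \ref{sec:2Cat}. Each of the six precategory sub-diagrams occurring in $(2.1)$ — the three horizontal ones $P$, $hP$, $hvP$ and the three vertical ones $vhP$, $vP$, $P_0$ — is the image of an inclusion functor $\iota_k\colon\mathbb{P}\to 2\mathbb{P}$, and precomposition with $\iota_k$ gives a restriction functor $\iota_k^{\ast}=(-)\circ\iota_k\colon\mathcal{C}^{2\mathbb{P}}\to\mathcal{C}^{\mathbb{P}}$. Since the precategory equations of $\mathbb{P}$ hold by construction in $2\mathbb{P}$ (and the interchange law and the ``horizontal composite of vertical identities'' requirement are likewise built into the relations of $2\mathbb{P}$), the only content of ``the image of the $k$-th precategory diagram is a category'' is that the diagrams $(2.2)$--$(2.4)$ hold for $\iota_k^{\ast}C$, i.e.\ that $\iota_k^{\ast}C\in Cat(\mathcal{C})$. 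Hence $C\in 2Cat(\mathcal{C})$ if and only if $\iota_k^{\ast}C\in Cat(\mathcal{C})$ for every $k$. Moreover, because limits in $\mathcal{C}^{2\mathbb{P}}$ and in $\mathcal{C}^{\mathbb{P}}$ are both pointwise, each $\iota_k^{\ast}$ preserves limits, so $\iota_k^{\ast}L$ is the pointwise limit in $\mathcal{C}^{\mathbb{P}}$ of the diagram $\iota_k^{\ast}D$, whose values all lie in $Cat(\mathcal{C})$ because $D$ takes values in $2Cat(\mathcal{C})$.

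It remains to see that $Cat(\mathcal{C})$ is closed in $\mathcal{C}^{\mathbb{P}}$ under \emph{all} small limits, not only under the products of Lemma \ref{lemma:products internal cats} and the pullbacks of Lemma \ref{lemma:pullbacks internal cats}; this is the one step where a small genuine argument is needed and is where I expect any difficulty to lie. One uses the classical construction of an arbitrary small limit as an equalizer of a pair of morphisms between the two small products $\prod_{i\in\mathrm{ob}\,\mathbb{I}}(\iota_k^{\ast}D_i)$ and $\prod_{u\in\mathrm{mor}\,\mathbb{I}}(\iota_k^{\ast}D_{\mathrm{cod}\,u})$, together with the fact that the equalizer of $f,g\colon X\to Y$ is the pullback of $(1_X,f)$ and $(1_X,g)$ along the binary product $X\times Y$. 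By Lemma \ref{lemma:products internal cats} the products involved stay in $Cat(\mathcal{C})$, and by Lemma \ref{lemma:pullbacks internal cats} so does the pullback computing the equalizer; hence the limit of $\iota_k^{\ast}D$, computed pointwise, lies in $Cat(\mathcal{C})$, i.e.\ $\iota_k^{\ast}L\in Cat(\mathcal{C})$ for every $k$, and therefore $L\in 2Cat(\mathcal{C})$. This shows $2Cat(\mathcal{C})$ is closed under limits in $\mathcal{C}^{2\mathbb{P}}$ with limits pointwise; specialising to the complete category $\mathcal{C}=Set$ then gives the stated consequence that $2Cat$ is closed under limits in $\hat{2\mathbb{P}}$.
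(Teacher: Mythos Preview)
Your proof is correct and follows essentially the same approach as the paper's: both argue that limits in $\mathcal{C}^{2\mathbb{P}}$ are pointwise, reduce to closure of $Cat(\mathcal{C})$ under limits in $\mathcal{C}^{\mathbb{P}}$, and obtain the latter from Lemmas \ref{lemma:pullbacks internal cats} and \ref{lemma:products internal cats} via the standard fact that pullbacks together with all products yield all limits. You have simply spelled out more explicitly the restriction functors $\iota_k^{\ast}$ and the equalizer-as-pullback step that the paper compresses into a single sentence.
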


\begin{proof}

The proof follows from the fact that limits are calculated pointwise in $\mathcal{C}^{2\mathbb{P}}$, and that a category with pullbacks and all products has all limits, and from Lemmas \ref{lemma:pullbacks internal cats} and \ref{lemma:products internal cats}.
\end{proof}

\section{Effective descent morphisms in 2Cat}
\label{sec:edm in 2Cat}

Consider again the category of all categories $Cat$ and its full inclusion in the category of precategories $\hat{\mathbb{P}}=Set^\mathbb{P}$.

A functor $p:\mathbb{E}\rightarrow \mathbb{B}$ is an effective descent morphism (e.d.m.)\footnote{Also called a \textit{monadic extension} in categorical Galois theory.} in $Cat$ if and only if it is surjective on composable triples of morphisms. The proof of this statement can be found in \cite[Proposition 6.2]{JST:edm}. In a completely analogous way, a class of effective descent morphisms in $2Cat$ is going to be given in the following Proposition \ref{proposition:EDM(2Cat)}.

\begin{proposition}\label{proposition:EDM(2Cat)}

 A 2-functor $2p:2\mathbb{E}\rightarrow 2\mathbb{B}$ is an e.d.m.\ in the category of all 2-categories $2Cat$ if it is surjective both on
\begin{itemize}
\item vertically composable triples of horizontally composable pairs of 2-cells, and on

\item horizontally composable triples of vertically composable pairs of 2-cells.
\end{itemize}
\end{proposition}

\begin{proof}\

Please confer the following Example \ref{example:EDM(2Cat)}, for the exact meaning of the statement.


Let $2p:2\mathbb{E}\rightarrow 2\mathbb{B}$ be surjective on vertically/horizontally composable triples of horizontally/vertically composable pairs of 2-cells. Then, $2p$ is an e.d.m.\ in $\hat{2\mathbb{P}}=Set^{2\mathbb{P}}$, since the effective descent morphisms in a category of presheaves are simply those surjective pointwise (which, of course, is implied by either surjectivity on triples of composable 2-cells). Hence, the following instance of \cite[Corollary 3.9]{JST:edm} can be applied:

if $2p:2\mathbb{E}\rightarrow 2\mathbb{B}$ in $2Cat$ is an e.d.m.\ in $\hat{2\mathbb{P}}=Set^{2\mathbb{P}}$ then $2p$ is an e.d.m.\ in $2Cat$ if and only if, for every pullback square

\begin{picture}(100,80)
\put (40,0){$2\mathbb{E}$}\put (40,50){$2\mathbb{D}$}\put (145,0){$2\mathbb{B}$}\put (145,50){$2A$}
\put(200,25){$(4.1)$}
\put(90,10){$2p$}
\put(70,3){\vector(1,0){58}}\put(70,55){\vector(1,0){58}}
\put(50,45){\vector(0,-1){33}}\put (155,45){\vector(0,-1){33}}
\end{picture}\\

\noindent in $\hat{2\mathbb{P}}=Set^{2\mathbb{P}}$ such that $2\mathbb{D}$ is in $2Cat$, then also $2A$ is in $2Cat$.

Since the pullback square $(4.1)$ is calculated pointwise (cf. Corollary \ref{corollary:limits 2Cat}), it induces six other pullback squares in $\hat{\mathbb{P}}=Set^{\mathbb{P}}$, corresponding to the three rows $P$, $hP$ and $hvP$, and the three columns $vhP$, $vP$ and $P_0$, in the 2-precategory diagram $(2.1)$.

The fact that $2p$ is surjective on vertically/horizontally composable triples of horizontally/vertically composable pairs of 2-cells, implies that its six restrictions (to the six rows and columns $2\mathbb{E}(P)$, $2\mathbb{E}(hP)$, $2\mathbb{E}(hvP)$, $2\mathbb{E}(vhP)$, $2\mathbb{E}(vP)$ and $2\mathbb{E}(P_0)$) are surjective on triples of composable morphisms in $Cat$, as it is easy to check. Hence, these six restrictions are effective descent morphisms in $Cat$. Therefore, $2A$ must always be a 2-category, provided so is $2\mathbb{D}$.

\end{proof}

\begin{example}\label{example:EDM(2Cat)}

It is obvious that the coproduct of 2-categories is just the disjoint union, as for categories.

Let $vh\mathbf{4}$ and $hv\mathbf{4}$ be the 2-categories generated by the following two diagrams, respectively:

\begin{picture}(60,40)


\put (0,0){$0$}\put (50,0){$1$}\put (100,0){$2$}

\put(120,0){;}

\put(10,30){\vector(1,0){35}}\put(10,12){\vector(1,0){35}}
\put(10,-6){\vector(1,0){35}}\put(10,-24){\vector(1,0){35}}

\put(23,18){$\Downarrow$}\put(23,0){$\Downarrow$}
\put(23,-18){$\Downarrow$}

\put(60,30){\vector(1,0){35}}\put(60,12){\vector(1,0){35}}
\put(60,-6){\vector(1,0){35}}\put(60,-24){\vector(1,0){35}}

\put(73,18){$\Downarrow$}\put(73,0){$\Downarrow$}
\put(73,-18){$\Downarrow$}

\put (150,0){$0$}\put (200,0){$1$}
\put (250,0){$2$}\put (300,0){$3$}

\put(320,0){.}

\put(160,18){\vector(1,0){35}}\put(160,0){\vector(1,0){35}}\put(160,-18){\vector(1,0){35}}
\put(210,18){\vector(1,0){35}}\put(210,0){\vector(1,0){35}}\put(210,-18){\vector(1,0){35}}
\put(260,18){\vector(1,0){35}}\put(260,0){\vector(1,0){35}}\put(260,-18){\vector(1,0){35}}

\put(173,6){$\Downarrow$}\put(223,6){$\Downarrow$}\put(273,6){$\Downarrow$}
\put(173,-12){$\Downarrow$}\put(223,-12){$\Downarrow$}\put(273,-12){$\Downarrow$}

\end{picture}\\\\\\

Consider, for each 2-category $2\mathbb{B}$, the 2-category $$2\mathbb{E}=(\coprod_{i\in I}vh\mathbf{4})+(\coprod_{j\in J}hv\mathbf{4}),$$ such that $I$ is the set of all vertically composable triples of horizontally composable pairs of 2-cells in $2\mathbb{B}$, and $J$ is the set of all horizontally composable triples of vertically composable pairs of 2-cells in $2\mathbb{B}$.

Then, there is an e.d.m.\ $2p:2\mathbb{E}\rightarrow 2\mathbb{B}$ which projects the corresponding copy of $vh\mathbf{4}$ and $hv\mathbf{4}$ to every $i\in I$ and every $j\in J$, respectively.\\

As another option, let $$2\mathbb{E}=\coprod_{k\in I\cup J}hvh\mathbf{4},$$ with $hvh\mathbf{4}$ the 2-category generated by the following diagram,

$$\begin{picture}(60,40)


\put (0,0){$0$}\put (50,0){$1$}

\put(10,30){\vector(1,0){35}}\put(10,12){\vector(1,0){35}}
\put(10,-6){\vector(1,0){35}}\put(10,-24){\vector(1,0){35}}

\put(23,18){$\Downarrow$}\put(23,0){$\Downarrow$}
\put(23,-18){$\Downarrow$}


\put (100,0){$2$}

\put(60,30){\vector(1,0){35}}\put(60,12){\vector(1,0){35}}
\put(60,-6){\vector(1,0){35}}\put(60,-24){\vector(1,0){35}}

\put(73,18){$\Downarrow$}\put(73,0){$\Downarrow$}
\put(73,-18){$\Downarrow$}


\put (150,0){$3$\hspace{10pt}.}

\put(110,30){\vector(1,0){35}}\put(110,12){\vector(1,0){35}}
\put(110,-6){\vector(1,0){35}}\put(110,-24){\vector(1,0){35}}

\put(123,18){$\Downarrow$}\put(123,0){$\Downarrow$}
\put(123,-18){$\Downarrow$}

\end{picture}$$\\

\end{example}

Let us now specify, in the following remark, what do we mean when saying a diagram generates a 2-category.

\begin{remark}\label{remark:EDM(2Cat)}
The 2-categories $vh\mathbf{4}$, $hv\mathbf{4}$ and $hvh\mathbf{4}$ are really 2-preorders, as defined just below at the beginning of Section \ref{sec:stable units}.

In fact, there is a free 2-preorder generated by a 2-relation (that is, a 2-graph plus a relation between 1-cells with the same initial and terminal 0-cell), built as follows:

starting with a 2-relation (as, for instance, the three diagrams in example \ref{example:EDM(2Cat)}), first one adds the $1Cat=Cat$ structure to the 0 and 1-cells (using the well known adjunction from graphs into categories, $Set^{\mathbb{G}}\rightarrow Cat$, with $\mathbb{G}$ the subcategory of $\mathbb{P}$ determined by $P_0, P_1$, $d$ and $c$; cf.\ Theorem 1 in \cite{SM:cat}[II.7]);

then, the intersection of all the relations on the finite strings of 1-cells, which constitute a 2-preorder and which include the original relation on the 1-cells (now the strings with just one arrow), gives the preorder structure on 2-cells (notice that there exists such a structure: relate every two strings with the same initial and terminal 0-cells);

hence, we have described an adjunction from $2Preord$ into $2Rel$ where the right adjoint is the forgetful functor ($2Rel$ is of course the full subcategory of the presheaves category $Set^{2\mathbb{G}}$, determined by the presheaves $G$ such that $G(vc)$ and $G(vd)$ are jointly monic; with $2\mathbb{G}$ the subcategory of $2\mathbb{P}$ determined by $2P1$, $P_1, P_0$, $vd$, $vc$, $d$ and $c$).

\end{remark}

\section{The reflection of 2-categories into 2-preorders has stable units and a monotone-light factorization}
\label{sec:stable units}

Let $2Preord$ be the full subcategory of $2Cat$ determined by the objects $C:2\mathbb{P}\rightarrow Set$ such that $Cvd$ and $Cvc$ are jointly monic (cf. diagram $(2.1)$), that is,

\begin{picture}(200,40)(0,0)
\put(40,0){$C(vP_2)$}
\put(80,20){\vector(1,0){40}}\put(80,5){\vector(1,0){40}}\put(80,-10){\vector(1,0){40}}
\put(90,-5){$Cvr$}\put(90,8){$Cvm$}\put(90,25){$Cvq$}
\put(125,0){$C(2P_1)$}
\put(165,20){\vector(1,0){40}}\put(205,5){\vector(-1,0){40}}\put(165,-10){\vector(1,0){40}}
\put(175,-5){$Cvc$}\put(175,8){$Cve$}\put(175,25){$Cvd$}
\put(210,0){$C(P_1)$} \put(300,0){$(5.1)$}
\end{picture}\\\\

\noindent is a preordered set.\\

There is a reflection

\begin{picture}(200,20)(0,0)

\put(0,0){$H\vdash I: 2Cat$}
\put(70,3){\vector(1,0){20}}
\put(95,0){$2Preord$,}
\put(310,0){$(5.2)$}

\put (150,0){$a$}\put (200,0){$b$}
\put (230,0){$a$}\put (280,0){$b$,}

\put(160,12){\vector(1,0){35}}\put(175,15){$f$}
\put(160,-6){\vector(1,0){35}}\put(175,-15){$g$}

\put(213,0){$\mapsto$}

\put(240,12){\vector(1,0){35}}\put(255,15){$f$}
\put(240,-6){\vector(1,0){35}}\put(255,-15){$g$}

\put(173,0){$\Downarrow$}\put(182,0){$\theta$}
\put(253,0){$\Downarrow$}\put(262,0){$\leq$}

\end{picture}\\

\noindent which identifies all 2-cells which have the same domain and codomain for the vertical composition. That is, the reflector $I$ takes the middle vertical category $C(vP)$ (cf. diagram $(5.1)$) to its image by the well known reflection $Cat\rightarrow Preord$ from categories into preordered sets (see \cite{X:ml}).\\

Many of the results in \cite{X:GenConComp} are going to be stated again, with small improvements in their presentation\footnote{The reader could easily bring these small improvements to the paper \cite{X:GenConComp}. In fact, although they are stated here in the particular case of the reflection from $2Cat$ into $2Preord$, they are completely general.}, in order to prove that the reflection $H\vdash I:2Cat\rightarrow 2Preord$ has stable units (in the sense of \cite{CHK:fact}).

\subsection{Ground structure} Consider the adjunction $H\vdash I:2Cat\rightarrow 2Preord$, described just above in $(5.2)$, with unit $\eta : 1_{2Cat}\rightarrow HI$.

\begin{itemize}

\item $2Cat$ has pullbacks (in fact, it has all limits - see Corollary \ref{corollary:limits 2Cat}).

\item $H$ is a full inclusion of $2Preord$ in $2Cat$, that is, $I$ is a reflection of a category with pullbacks into a full subcategory.

\item Consider also the forgetful functor $U:2Cat\rightarrow 2RGrph$, where $2RGrph$ is the presheaves category $Set^{2\mathbb{G}}$, with $2\mathbb{G}$ the category generated by the following \textit{2-reflexive graph diagram},\\

\begin{picture}(200,30)

\put(28,0){$2P_1$}
\put(65,20){\vector(1,0){70}}\put(135,5){\vector(-1,0){70}}\put(65,-10){\vector(1,0){70}}
\put(90,-5){$hc$}\put(90,8){$he$}\put(90,25){$hd$}
\put(140,0){$P_0$}

\put (0,-35){$vc$}\put (15,-10){\vector(0,-1){40}}
\put (20,-35){$ve$}\put (40,-50){\vector(0,1){40}}
\put (45,-35){$vd$}\put (60,-10){\vector(0,-1){40}}

\put (150,-35){$1_{P_0}$}\put (145,-10){\vector(0,-1){40}}

\put(28,-70){$P_1$}
\put(65,-50){\vector(1,0){70}}\put(135,-65){\vector(-1,0){70}}\put(65,-80){\vector(1,0){70}}
\put(90,-75){$c$}\put(90,-62){$e$}\put(90,-45){$d$}
\put(140,-70){$P_0$\hspace{10pt,}}

\end{picture}\\\\\\\\\\\\

\noindent satisfying the same equations as in the 2-precategory diagram $(2.1)$.

\item $\mathcal{E}$ denotes the class of all morphisms $(2g_1,g_1,g_0):G\rightarrow H$ of $2RGrph$ which are bijections on objects and on arrows, and surjections on 2-cells (that is, $g_0:G(P_0)\rightarrow H(P_0)$ and $g_1:G(P_1)\rightarrow H(P_1)$ are bijections, and $2g_1:G(2P_1)\rightarrow H(2P_1)$ is a surjection).

\item $\mathcal{T}=\{T\}$ is a singular set, with $T$ the 2-preorder generated by the diagram \hspace{10pt}\begin{picture}(95,30)

\put (0,0){$a$}\put (50,0){$a'$}

\put(70,0){$(5.3)$}

\put(10,12){\vector(1,0){35}}\put(23,17){$h$}
\put(10,-6){\vector(1,0){35}}\put(23,-17){$h'$}

\put(23,0){$\Downarrow$}\put(32,0){$\leq$}

\end{picture},\\\\

\noindent that is, a 2-preorder with two objects, two non-identity arrows and only one non-identity (both horizontally and vertically) 2-cell.\\

\end{itemize}

Then, the following four conditions are satisfied.\\

\begin{enumerate}

\item[(a)] $U$ preserves pullbacks (in fact, it preserves all limits).\\

\item[(b)] $\mathcal{E}$ is pullback stable in $2RGrph$, and if $g'\circ g$ is in $\mathcal{E}$ so is $g'$, provided $g$ is in $\mathcal{E}$.\footnote{In \cite{X:GenConComp}, it was also demanded in (b) that $\mathcal{E}$ is closed under composition, which is not needed. We take this opportunity to correct that redundancy in \cite{X:GenConComp}.}\\

\item[(c)] Every map $U\eta_C:U(C)\rightarrow UHI(C)$ belongs to $\mathcal{E}$, $C\in 2Cat$ (this is also obvious).\\

\item[(d)]\footnote{This item is rephrased from \cite{X:GenConComp}, in a way that seems to us now more easily understandable. Remark also that the diagram $(5.4)$ is simplified, suppressing one morphism $UH(T)\rightarrow UH(T)$, which can be the identity. We take this opportunity to correct that other redundancy.}Let $g:N\rightarrow M$ be any morphism of $2Preord$ such that $UHg:UH(N)\rightarrow UH(M)$ is in $\mathcal{E}$.

\noindent If,

\hspace{5pt} there is one morphism $f:A\rightarrow UH(N)$ of $2RGrph$ in $\mathcal{E}$

\hspace{5pt} such that,

\hspace{20pt} for all morphisms $c:T\rightarrow M$ in $2Preord$

\hspace{20pt} ($T$ as defined in (5.3)),

\hspace{20pt} there is a commutative diagram as below

$$
\begin{picture}(100,60)
\put(0,0){$A$}\put(-30,50){$A\times_{UH(M)}UH(T)$}\put(80,0){UH(N)}\put(165,0){$UH(M)$}\put(165,50){$UH(T)$}
\put(-10,25){$pr_1$}\put(180,25){$UHc$}\put(250,25){$(5.4)$}

\put(40,-8){$f$}\put(125,-8){$UHg$}
\put(15,3){\vector(1,0){60}}\put(120,3){\vector(1,0){40}}\put(65,53){\vector(1,0){95}}\put(90,57){$pr_2$}
\put(7,45){\vector(0,-1){33}}\put(175,45){\vector(0,-1){33}}

\put(160,45){\vector(-1,-1){37}}

\end{picture}\\
$$

\noindent then

\hspace{10pt} $g:N\rightarrow M$ is an isomorphism in $2Preord$.\\

\end{enumerate}

It remains to show that the statement in (d) is true, which is trivial, since if $g:N\rightarrow M$ is in $\mathcal{E}$, seen as a morphism of $2RGrph$, then $g$ must be an isomorphism in $2Preord$ by the uniqueness of the 2-cells in $N$ and in $M$.

\subsection{Stable units}\label{subsection:stable units}

Using the fact that a \textit{ground structure} holds (which guarantees the validity of Theorems 2.1 and 2.2 in \cite{X:GenConComp}), it will be possible to show that $H\vdash I: 2Cat\rightarrow 2Preord$ is an admissible reflection in the sense of categorical Galois theory (cf. \cite{G. Janelidze}) or, equivalently, semi-left-exact in the sense of \cite{CHK:fact}. Furthermore, it will be shown, always using the results in \cite{X:GenConComp}, that the reflection $H\vdash I: 2Cat\rightarrow 2Preord$ satisfies the stronger condition of having stable units.

\begin{definition}\label{def:connected component}

Consider any morphism $\mu :T\rightarrow HI(C)$ from $T$ ($\in\mathcal{T}$; cf. $(5.3)$), for some $C\in 2Cat$.

The \textit{connected component} of the morphism $\mu$ is the pullback $C_\mu
=C\times_{HI(C)}T$ in the following pullback square

$$
\begin{picture}(100,60)
\put(0,0){$C$}\put(0,40){$C_\mu$}\put(80,0){$HI(C)$\hspace{10pt,}}\put(90,40){$T$}
\put(-15,20){$\pi_1^\mu$}\put(100,20){$\mu$}\put(120,20){$(5.5)$}
\put(40,10){$\eta_C$}\put(40,50){$\pi_2^\mu$}
\put(15,3){\vector(1,0){60}}\put(20,43){\vector(1,0){54}}
\put(4,35){\vector(0,-1){23}}\put(92,35){\vector(0,-1){23}}
\end{picture}
$$

\noindent where $\eta_C$ is the unit morphism of $C$ in the reflection $H\vdash I: 2Cat\rightarrow 2Preord$, and $T$ is identified with $H(T)$.

\end{definition}

\begin{theorem}\label{theorem:semi-left-exactness}

The reflection $H\vdash I: 2Cat\rightarrow 2Preord$ is semi-left-exact.

\end{theorem}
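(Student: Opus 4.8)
The plan is to obtain semi-left-exactness as a formal consequence of the \emph{ground structure} exhibited above, following the pattern of \cite[Theorem 2.1]{X:GenConComp}. Recall that $H\vdash I$ is semi-left-exact --- equivalently, admissible in the sense of categorical Galois theory \cite{G. Janelidze} --- precisely when, for every $C\in 2Cat$ and every morphism $\varphi:Y\to I(C)$ of $2Preord$, the reflector $I$ sends the pullback of the unit $\eta_C:C\to HI(C)$ along $H\varphi$ to a pullback in $2Preord$; since $I\eta_C$ is invertible, this amounts to the canonical comparison $I\bigl(C\times_{HI(C)}H(Y)\bigr)\to Y$ being an isomorphism.

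First I would reduce this universal statement to the single test object $T\in\mathcal{T}$ (cf. $(5.3)$), the remaining data being trivial. This is exactly what the forgetful functor $U:2Cat\to 2RGrph$ and the class $\mathcal{E}$ are designed to control: because $U\eta_C\in\mathcal{E}$ (condition (c)), the unit is already a bijection on objects and on arrows, so any failure of pullback-preservation must occur among the $2$-cells, and morphisms $T\to HI(C)$ detect precisely the parallel pairs of $1$-cells carrying a $2$-cell in $I(C)$. Concretely, in the notation of Definition \ref{def:connected component}, for $\mu:T\to HI(C)$ one computes that the connected component $C_\mu=C\times_{HI(C)}T$ has the same objects and non-identity arrows as $T$ but possibly several parallel $2$-cells lying over the unique non-identity $2$-cell of $T$; applying $I$ identifies these, so the comparison $I(C_\mu)\to T$ is an isomorphism. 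The criterion of \cite{X:GenConComp} is what upgrades this pointwise verification to the full pullback-preservation property.

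The main step is then to invoke that criterion. Having checked conditions (a)--(d) above --- $U$ preserves pullbacks, $\mathcal{E}$ is pullback-stable with the stated cancellation property, every $U\eta_C$ lies in $\mathcal{E}$, and the lifting condition (d) forces isomorphisms in $2Preord$ --- we have a ground structure in the sense of \cite{X:GenConComp}, and \cite[Theorem 2.1]{X:GenConComp} yields that $H\vdash I$ is semi-left-exact. All the genuine content sits in (d), and ultimately in the elementary observation already recorded: a morphism of $2Preord$ that is bijective on objects and on arrows and surjective on $2$-cells is automatically an isomorphism, since a $2$-preorder has at most one $2$-cell between a given parallel pair of $1$-cells. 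This is precisely what makes the comparison maps $I(C_\mu)\to T$, and hence the general comparisons $I\bigl(C\times_{HI(C)}H(Y)\bigr)\to Y$, invertible.

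I do not anticipate a deep obstacle, only careful bookkeeping: one must make sure the abstract hypotheses of \cite{X:GenConComp} are met verbatim --- in particular that $U:2Cat\to 2RGrph$ preserves the relevant limits (immediate from Corollary \ref{corollary:limits 2Cat} together with pointwise computation of limits in both presheaf categories) and that $\mathcal{E}$ is pullback-stable in $2RGrph$ --- and, in the reduction to $\mathcal{T}$, one must still account for the degenerate configurations (where $\mu$ is not injective on objects, or the two lifted arrows of $C_\mu$ coincide) and confirm that each of them again produces an iso $I(C_\mu)\to T$. None of this is difficult, but it is where the honest work lies.
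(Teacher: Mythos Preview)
Your proposal is correct and follows essentially the same route as the paper: both invoke \cite[Theorem~2.1]{X:GenConComp} (valid once the ground structure (a)--(d) is in place) to reduce semi-left-exactness to checking that $I\pi_2^\mu:I(C_\mu)\to I(T)\cong T$ is an isomorphism for every $\mu:T\to HI(C)$, and both verify this by the explicit computation that $C_\mu$ has the shape of $T$ with a set of parallel $2$-cells indexed by $Hom_{C(vP)}(k,k')$, which $I$ collapses. The paper's write-up is terser and does not single out the degenerate configurations you mention, but they are absorbed by the same computation.
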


\begin{proof}
According to Theorem 2.1 in \cite{X:GenConComp}, one has to show that $I\pi_2^\mu:I(C_\mu)\rightarrow I(T)$ is an isomorphism, for every connected component $C_\mu$.\\

If $\mu($\begin{picture}(60,20)(0,0)

\put (0,0){$a$}\put (50,0){$a'$}

\put(10,12){\vector(1,0){35}}\put(25,16){$h$}
\put(10,-6){\vector(1,0){35}}\put(25,-16){$h'$}

\put(23,0){$\Downarrow$}\put(32,0){$\leq$}

\end{picture}$)=\ $\begin{picture}(60,20)(0,0)

\put (0,0){$c$}\put (50,0){$c'$,}

\put(10,12){\vector(1,0){35}}\put(25,16){$k$}
\put(10,-6){\vector(1,0){35}}\put(25,-16){$k'$}

\put(23,0){$\Downarrow$}\put(32,0){$\leq$}

\end{picture} then,\\\\

 \noindent since $U\eta_C\in \mathcal{E}$ (identity on objects and morphisms, and surjection on 2-cells), the pullback $C_\mu$ is the 2-category generated by the diagram\\

 \begin{picture}(150,20)(0,0)

\put (0,0){$(c,a)$}\put (110,0){$(c',a')$}

\put(30,12){\vector(1,0){80}}\put(60,16){$(k,h)$}
\put(30,-6){\vector(1,0){80}}\put(60,-16){$(k',h')$}

\put(63,0){$\Downarrow$}\put(72,0){$(\theta_r,\leq)$}

\end{picture},\\\\

\noindent with $\theta_r\in Hom_{C(vP)}(k,k')=\{\theta_r\ |\ r\in R\}$, the set $R$ indexing all the 2-cells $\theta_r$ in $C$ with vertical domain $k:c\rightarrow c'$ and vertical codomain $k':c\rightarrow c'$.

Hence, $I(C_\mu)\cong T$.\end{proof}

\begin{theorem}\label{theorem:stable units}

The reflection $H\vdash I: 2Cat\rightarrow 2Preord$ has stable units.

\end{theorem}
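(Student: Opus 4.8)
The plan is to invoke the stable-units criterion of \cite{X:GenConComp} (its Theorem~2.2, available here because the ground structure assembled above holds), which strengthens the connected-component test used for Theorem~\ref{theorem:semi-left-exactness}: it reduces ``stable units'' to the requirement that the formation of connected components be stable under pullback along arbitrary $2$-functors, equivalently that the reflector $I$ preserve every pullback $A\times_{H(X)}B$ in $2Cat$ taken over an object $X$ of $2Preord$. So I would fix a $2$-preorder $X$, $2$-functors $f\colon A\to H(X)$ and $g\colon B\to H(X)$ in $2Cat$, form $P=A\times_{H(X)}B$, and show that the canonical comparison $2$-functor $c\colon I(P)\to I(A)\times_{X}I(B)$ in $2Preord$ (using $IH\cong 1$) is an isomorphism. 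Since $2Cat$ is closed under limits in $Set^{2\mathbb P}$ and they are computed pointwise (Corollary~\ref{corollary:limits 2Cat}), $P$ is obtained by pulling back $f$ and $g$ separately on the components $P_0$, $P_1$ and $2P_1$ of diagram~$(2.1)$, and $I$ acts componentwise as the identity on $0$- and $1$-cells and as the quotient of $2$-cells by vertical parallelism; the same is true of $I(A)\times_X I(B)$.

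First I would read off, exactly as in the proof of Theorem~\ref{theorem:semi-left-exactness}, that on $0$-cells and on $1$-cells $c$ is the obvious bijection, nothing being quotiented there and the two pullbacks agreeing. The crux is the $2$-cell component, and here the decisive fact is that in a $2$-preorder there is at most one $2$-cell between any parallel pair of $1$-cells (the defining property recorded in~$(5.1)$, which is also why the singleton $\mathcal T=\{T\}$ suffices in the ground structure). Concretely: given parallel $1$-cells $(\phi_A,\phi_B)$ and $(\psi_A,\psi_B)$ of $P$ and $2$-cells $\theta_A\colon\phi_A\Rightarrow\psi_A$ in $A$, $\theta_B\colon\phi_B\Rightarrow\psi_B$ in $B$, the images $f(\theta_A)$ and $g(\theta_B)$ are both the unique $2$-cell of $X$ from $f(\phi_A)=g(\phi_B)$ to $f(\psi_A)=g(\psi_B)$, hence equal, so $(\theta_A,\theta_B)$ is already a $2$-cell of $P$. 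Thus a vertically parallel pair of $1$-cells of $P$ is joined by a $2$-cell precisely when its two projections are joined in $A$ and in $B$ respectively; this makes $c$ a bijection on $2$-cells, and it is well defined on the quotients because the vertical source and target of $(\theta_A,\theta_B)$ are the pairs of those of $\theta_A$ and $\theta_B$, so the class $[(\theta_A,\theta_B)]$ depends only on $[\theta_A]$ and $[\theta_B]$. Hence $c$ is an isomorphism, $I$ preserves the pullback, and the reflection has stable units.

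I expect the only genuine work to be bookkeeping rather than mathematics: checking that the pointwise pullback in $Set^{2\mathbb P}$ really lies in $2Cat$ (Corollary~\ref{corollary:limits 2Cat}), that $c$ is well defined on the $2$-cell quotients independently of representatives, and --- the one place the argument could go wrong if applied carelessly --- that the ``at most one $2$-cell'' property of $X$ is used at the component $2P_1$ of $(2.1)$ and not merely at $P_1$. Modulo this, the proof is the verbatim $2$-categorical transcription of the corresponding argument for $Cat\to Preord$ in \cite{X:ml}, and it is exactly this ``compatibility over a $2$-preorder is automatic'' phenomenon that upgrades semi-left-exactness (Theorem~\ref{theorem:semi-left-exactness}) to the stable-units condition of \cite{CHK:fact}.
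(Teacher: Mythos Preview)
Your argument is mathematically sound, but it does not actually use Theorem~2.2 of \cite{X:GenConComp} in the way you frame it. What you verify directly is the defining property of stable units from \cite{CHK:fact}: that $I$ preserves every pullback $A\times_{H(X)}B$ over an arbitrary $2$-preorder $X$. The key step---that compatibility of a pair $(\theta_A,\theta_B)$ of $2$-cells over a $2$-preorder is automatic because the target has at most one $2$-cell between parallel $1$-cells---is exactly right, and the bookkeeping on the $P_0$, $P_1$, $2P_1$ components is correct via Corollary~\ref{corollary:limits 2Cat}.

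The paper proceeds differently: it invokes Theorem~2.2 of \cite{X:GenConComp} in its actual form, which (given the ground structure) reduces the stable-units condition to checking only that $I(C_\mu\times_T D_\nu)\cong T$ for every pair of connected components $C_\mu$, $D_\nu$ over the single test object $T$ of~$(5.3)$. Using the explicit description of $C_\mu$ and $D_\nu$ obtained in the proof of Theorem~\ref{theorem:semi-left-exactness}, the pullback $C_\mu\times_T D_\nu$ is seen to have $2$-cells indexed by $R\times S$ and hence to reflect to $T$. Your route is more self-contained---it bypasses the connected-component machinery of \cite{X:GenConComp} and goes straight to the \cite{CHK:fact} definition---while the paper's route localises the verification to the single generating $2$-preorder $T$, at the cost of relying on the ground-structure theorems. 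Both exploit the same phenomenon (uniqueness of $2$-cells in the base), just at different levels of generality. You should, however, drop the attribution of your pullback-preservation statement to Theorem~2.2 of \cite{X:GenConComp}; that theorem gives the connected-component criterion, not the general one you prove.
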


\begin{proof}
According to Theorem 2.2 in \cite{X:GenConComp}, one has to show that $I(C_\mu\times_TD_\nu)\cong T$, for every pair of connected components $C_\mu$, $D_\nu$, where $C_\mu\times_TD_\nu$ is the pullback object in any pullback of the form
$$
\begin{picture}(100,60)
\put(0,0){$C_{\mu}$}\put(-18,40){$C_\mu\times_{T}
D_{\nu}$}\put(95,0){$T$\hspace{10pt},}\put(90,40){$D_{\nu}$}
\put(-8,20){$p_1$}\put(100,20){$\pi_2^\nu$}
\put(45,10){$\pi_2^\mu$}\put(45,50){$p_2$}
\put(15,3){\vector(1,0){70}}\put(33,43){\vector(1,0){50}}
\put(4,35){\vector(0,-1){23}}\put(95,35){\vector(0,-1){23}}
\end{picture}
$$

\noindent where $\pi_2^\mu$ and $\pi_2^\nu$ are the second projections in pullback diagrams of the form $(5.5)$.\\

According to the previous Theorem \ref{theorem:semi-left-exactness}, one can suppose (up to isomorphism) that $C_\mu =\ $\begin{picture}(60,40)(0,0)

\put (0,0){$c$}\put (50,0){$c'$}

\put(10,12){\vector(1,0){35}}\put(25,16){$k$}
\put(10,-6){\vector(1,0){35}}\put(25,-16){$k'$}

\put(23,0){$\Downarrow$}\put(32,0){$\theta_r$}

\end{picture}, $r\in R$, and $D_\nu =\ $\begin{picture}(60,40)(0,0)

\put (0,0){$d$}\put (50,0){$d'$}

\put(10,12){\vector(1,0){35}}\put(25,16){$l$}
\put(10,-6){\vector(1,0){35}}\put(25,-16){$l'$}

\put(23,0){$\Downarrow$}\put(32,0){$\delta_s$}

\end{picture},\\\\

 \noindent $s\in S$ (the identity morphisms and the identity 2-cells are not displayed); the sets $R$ and $S$ indexing respectively all the 2-cells $\theta_r$ in $C$ with vertical domain $k:c\rightarrow c'$ and vertical codomain $k':c\rightarrow c'$, and all the 2-cells $\delta_s$ in $D$ with vertical domain $l:d\rightarrow d'$ and vertical codomain $l':d\rightarrow d'$.

 Hence, $C_\mu\times_TD_\nu =\ $\begin{picture}(150,40)(0,0)

\put (0,0){$(c,d)$}\put (110,0){$(c',d')$}

\put(30,12){\vector(1,0){80}}\put(60,16){$(k,l)$}
\put(30,-6){\vector(1,0){80}}\put(60,-16){$(k',l')$}

\put(63,0){$\Downarrow$}\put(72,0){$(\theta_r,\delta_s)$}

\end{picture}, $(r,s)\in R\times S$, and so it is obvious that $I(C_\mu\times_TD_\nu ) \cong\ $\begin{picture}(60,40)(0,0)

\put (0,0){$a$}\put (50,0){$a'$.}

\put(10,12){\vector(1,0){35}}\put(25,16){$h$}
\put(10,-6){\vector(1,0){35}}\put(25,-16){$h'$}

\put(23,0){$\Downarrow$}\put(32,0){$\leq$}

\end{picture}\\
\end{proof}

\subsection{Monotone-light factorization for 2-categories via 2-preorders}\label{subsection:existence m-l fact}

\begin{theorem}\label{theorem:monotone-light fact}

The reflection $H\vdash I: 2Cat\rightarrow 2Preord$ does have a monotone-light factorization.

\end{theorem}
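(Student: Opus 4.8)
The plan is to derive the statement entirely from the general theory of simultaneous stabilization and localization of a reflective factorization system recalled in Section~\ref{sec-intro1} and developed in \cite{CJKP:stab}. Since $2Cat$ is complete (Corollary~\ref{corollary:limits 2Cat}) and $H:2Preord\rightarrow 2Cat$ is a full inclusion which is semi-left-exact (Theorem~\ref{theorem:semi-left-exactness}), the reflection $H\vdash I:2Cat\rightarrow 2Preord$ determines a reflective factorization system $(\mathcal{E},\mathcal{M})$ on $2Cat$ (in the notation of the introduction); from it one forms $\mathcal{E}'$, the largest pullback-stable subclass of $\mathcal{E}$, and $\mathcal{M}^{*}$, the class of 2-functors $f$ for which there is an effective descent morphism $p$ with $p^{*}(f)\in\mathcal{M}$. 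What must be shown is that $(\mathcal{E}',\mathcal{M}^{*})$ is again a factorization system, which is exactly the assertion that the reflection admits a monotone-light factorization.

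For this I would invoke the criterion of \cite{CJKP:stab}: if the reflection has \emph{stable units} and $2Cat$ has \emph{enough effective descent morphisms with domain in $2Preord$} --- that is, every $2\mathbb{B}\in 2Cat$ receives an effective descent morphism $2p:2\mathbb{E}\rightarrow 2\mathbb{B}$ with $2\mathbb{E}\in 2Preord$ --- then $(\mathcal{E}',\mathcal{M}^{*})$ is a monotone-light factorization system (and, stable units implying admissibility, $\mathcal{M}^{*}$ is moreover the class of coverings in the sense of categorical Galois theory, cf.\ \cite{G. Janelidze}). The first hypothesis is precisely Theorem~\ref{theorem:stable units}. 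For the second, I would use Example~\ref{example:EDM(2Cat)}: for an arbitrary 2-category $2\mathbb{B}$, with $I$ and $J$ the sets of, respectively, vertically and horizontally composable triples of 2-cells of $2\mathbb{B}$, the projection $2p:\coprod_{k\in I\cup J}vh\mathbf{4}\rightarrow 2\mathbb{B}$ is surjective on both vertically and horizontally composable triples of 2-cells, hence an effective descent morphism by Proposition~\ref{proposition:EDM(2Cat)}; and its domain, being a coproduct of copies of the 2-preorder $vh\mathbf{4}$ and $2Preord$ being closed under the disjoint-union coproduct, lies in $2Preord$.

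With both hypotheses in hand, the criterion quoted above gives that $(\mathcal{E}',\mathcal{M}^{*})$ is a factorization system on $2Cat$, i.e.\ a monotone-light factorization for the reflection $H\vdash I:2Cat\rightarrow 2Preord$. Since every ingredient has already been established, no real obstacle remains; the only points needing care are to check that one is literally within the hypotheses of the relevant theorem of \cite{CJKP:stab}, and in particular that the domains of the effective descent morphisms supplied by Example~\ref{example:EDM(2Cat)} genuinely belong to $2Preord$ --- this being the role of the footnoted remark there that $v\mathbf{4}$, $h\mathbf{4}$ and $vh\mathbf{4}$ are 2-preorders. That the resulting factorization is non-trivial, i.e.\ $\mathcal{E}'\neq\mathcal{E}$ (equivalently $\mathcal{M}\neq\mathcal{M}^{*}$), is a separate matter, to be settled by the explicit identification of $\mathcal{M}^{*}$ with the 2-functors faithful on vertical 2-cells announced in the abstract.
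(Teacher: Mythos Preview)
Your proposal is correct and follows essentially the same route as the paper's own proof: invoke the general criterion from \cite{CJKP:stab} (the paper cites it via Corollary~6.2 of \cite{X:iml}), verify stable units by Theorem~\ref{theorem:stable units}, and verify the existence of enough effective descent morphisms with domain in $2Preord$ by Example~\ref{example:EDM(2Cat)}. Your write-up is simply more explicit about checking the hypotheses (in particular that the domain of the covering e.d.m.\ is a 2-preorder), and you correctly separate off the non-triviality question, which the paper treats in Section~\ref{sec:Vertical and stably-vertical}.
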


\begin{proof}

The statement is a consequence of the central result of \cite{CJKP:stab} (cf. Corollary 6.2 in \cite{X:iml}), because $H\vdash I$ has stable units (cf. Theorem \ref{theorem:stable units}) and for every $2\mathbb{B}\in 2Cat$ there is an e.d.m.\ $2p:2\mathbb{E}\rightarrow 2\mathbb{B}$ with $2\mathbb{E}\in 2Preord$ (cf.\ Example \ref{example:EDM(2Cat)}).

\end{proof}

In the following section \ref{sec:Vertical and stably-vertical}, it will be proved that the monotone-light factorization system is not trivial. That is, it does not coincide with the reflective factorization system associated to the reflection of $2Cat$ into $2Preord$.

\section{Vertical and stably-vertical 2-functors}
\label{sec:Vertical and stably-vertical}

In this section, it will be given a characterization of the class of vertical morphisms $\mathcal{E}_I$ in the reflective factorization system $(\mathcal{E}_I,\mathcal{M}_I)$, and of the class of the stably-vertical morphisms $\mathcal{E}'_I$ ($\subseteq\mathcal{E}_I$)\footnote{$\mathcal{E}'_I$ is the largest subclass of $\mathcal{E}_I$ stable under pullbacks. The terminologies ``vertical morphisms" and ``stably-vertical morphisms" were introduced in \cite{for:ins:sep:factorization}.} in the monotone-light factorization system $(\mathcal{E}'_I,\mathcal{M}^*_I)$, both associated to the reflection $2Cat\rightarrow 2Preord$. Then, since $\mathcal{E}'_I$ is a proper class of $\mathcal{E}_I$, one concludes that $(\mathcal{E}'_I,\mathcal{M}^*_I)$ is a non-trivial monotone-light factorization system.\\

Consider a 2-functor $f:A\rightarrow B$, which is obviously determined by the three functions $f_0:A(P_0)\rightarrow B(P_0)$, $f_1:A(P_1)\rightarrow B(P_1)$ and $2f_1:A(2P_1)\rightarrow B(2P_1)$ (cf.\ diagram $(2.1)$), so that we may make the identification $f=(2f_1,f_1,f_0)$.

\begin{proposition}\label{proposition:vertical morphisms}

A 2-functor $f=(2f_1,f_1,f_0):A\rightarrow B$ belongs to the class $\mathcal{E}_I$ of vertical 2-functors if and only if the following two conditions hold:
\begin{enumerate}
\item $f_0$ and $f_1$ are bijections;
\item for every two elements $h$ and $h'$ in $A(P_1)$, if $Hom_{B(vP)}(f_1h,f_1h')$ is nonempty then so is $Hom_{A(vP)}(h,h')$.
\end{enumerate}
\end{proposition}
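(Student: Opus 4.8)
The plan is to unwind the definition of the class $\mathcal{E}_I$ of vertical morphisms for the reflection $H \vdash I : 2Cat \rightarrow 2Preord$ and translate it into the two stated pointwise conditions. Recall that $f : A \rightarrow B$ lies in $\mathcal{E}_I$ precisely when $I(f)$ is an isomorphism in $2Preord$, equivalently when in the $(\mathcal{E}_I,\mathcal{M}_I)$-factorization of $f$ the second factor is the reflection unit $\eta_B$ up to isomorphism; concretely, $f \in \mathcal{E}_I$ iff the comparison morphism $A \rightarrow HI(A) \times_{HI(B)} B$ (obtained from the naturality square of $\eta$) becomes, after applying $I$, invertible, and since $H$ is a full inclusion and $I$ sends the middle vertical category $C(vP)$ to its image under the classical reflection $Cat \rightarrow Preord$, this reduces entirely to a statement about the functions $f_0, f_1, 2f_1$ together with the vertical-composition structure. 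First I would make this reduction explicit: because $I$ only collapses vertically-parallel $2$-cells, $I(f)$ is an isomorphism iff $f$ induces a bijection on objects, a bijection on $1$-cells, and a bijection on the \emph{relation} ``there exists a vertical $2$-cell $h \Rightarrow h'$'' — which is exactly an isomorphism of the underlying preordered sets $I(A(vP)) \cong I(B(vP))$ compatible over the $0$- and $1$-cell data.

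The forward direction ($f \in \mathcal{E}_I \Rightarrow$ (1) and (2)) is then almost immediate: if $I(f)$ is an isomorphism then, reading off the object and arrow components (which are unaffected by the reflection since $I$ is identity on the horizontal precategory diagrams $P$, $hP$, $hvP$ and only acts on the vertical direction), $f_0$ and $f_1$ must be bijections, giving (1); and the induced map on the preorders $\operatorname{Hom}_{A(vP)}(h,h') \neq \emptyset \Leftrightarrow \operatorname{Hom}_{B(vP)}(f_1 h, f_1 h') \neq \emptyset$ gives both implications of a preorder isomorphism, in particular the nontrivial half which is condition (2). For the converse, assuming (1) and (2), I would exhibit an explicit inverse to $I(f)$ in $2Preord$: on $0$- and $1$-cells take $f_0^{-1}$ and $f_1^{-1}$; on vertical $2$-cells there is at most one $2$-cell between any parallel pair in a $2$-preorder, so the assignment is forced, and it is well-defined precisely because condition (2) guarantees that whenever $B$ has a vertical $2$-cell $f_1 h \Rightarrow f_1 h'$ then $A$ has one $h \Rightarrow h'$ (while the reverse implication is automatic from functoriality of $f$). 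One must then check this candidate is a $2$-functor — compatibility with $hd, hc, vd, vc$ and the horizontal and vertical composites $hm, vm$ — but in a $2$-preorder all these coherences are automatic since hom-sets of vertical $2$-cells are subsingletons and the horizontal composition is likewise determined; so the check is routine.

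The one point that needs genuine care, and which I expect to be the main obstacle, is justifying the claim that being in $\mathcal{E}_I$ depends \emph{only} on the collapsed vertical preorder and on the $0$/$1$-cell data, i.e. that no horizontal $2$-cell information survives into the condition ``$I(f)$ iso.'' This follows from the precise description of $I$ given after $(5.2)$ — $I$ is the identity on everything except that it applies $Cat \rightarrow Preord$ to $C(vP)$ — combined with the fact that in the target $2Preord$ the horizontal composite of $2$-cells is already uniquely determined by the vertical preorder and the $1$-cells (because $he, hd, hc$ together with joint monicity of $Cvd, Cvc$ pin down each $2$-cell by its horizontal source and target). So once $f_0, f_1$ are bijective and the vertical preorders are isomorphic, the horizontal structure is transported automatically and $I(f)$ is forced to be an isomorphism. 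I would spell this out using the characterization $(5.1)$ of $2Preord$ and the explicit form of $\eta$, and then the two directions above close the proof.
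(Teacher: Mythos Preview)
Your approach is correct and at its core coincides with the paper's: both hinge on the observation that $f\in\mathcal{E}_I$ iff $I(f)$ is an isomorphism, and that since $I$ does not touch the $P_0$ and $P_1$ components one has $If_0=f_0$, $If_1=f_1$, so the only nontrivial condition is that $I2f_1$ be a bijection, which (given $f_1$ bijective) amounts exactly to condition~(2). The paper dispatches this in two lines; you arrive at the same place but take a considerably longer route.

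Two remarks. First, the detour through the comparison morphism $A\to HI(A)\times_{HI(B)}B$ and the discussion of whether ``horizontal $2$-cell information survives'' are unnecessary: once you know $I(f)$ is a morphism of $2$-preorders and that a morphism in $2Preord$ (being a full subcategory of a presheaf category) is invertible iff its components are bijections, the whole question collapses to the three maps $If_0,If_1,I2f_1$. Your explicit construction of an inverse $2$-functor is fine but superfluous. Second, a small slip: you write that $I$ is the identity on the horizontal precategory diagrams $P$, $hP$, $hvP$. This is true for $P$ but false for $hP$ and $hvP$, since both of these involve $2P_1$ (respectively $hP_2$, $hvP_2$, $vP_2$), which $I$ genuinely quotients. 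What you actually need, and what is true, is only that $I$ is the identity at $P_0$ and $P_1$; that already suffices for the argument.
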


\begin{proof}

The 2-functor $f=(2f_1,f_1,f_0)$ belongs to $\mathcal{E}_I$ if and only if $If$ is an isomorphism (cf.\ \cite[\S 3.1]{CJKP:stab}), that is, $If_0$, $If_1$, and $I2f_1$ are bijections. Since $If_0=f_0$ and $If_1=f_1$, the fact that $f\in \mathcal{E}_I$ implies and is implied by (1) and (2) is trivial.

\end{proof}

\begin{proposition}\label{proposition:stably-vertical}

A 2-functor $f=(2f_1,f_1,f_0):A\rightarrow B$ belongs to the class $\mathcal{E}'_I$ of stably-vertical 2-functors if and only if the following two conditions hold:

\begin{enumerate}

\item $f_0$ and $f_1$ are bijections;
\item for every two elements $h$ and $h'$ in $A(P_1)$, $f$ induces a surjection $Hom_{A(vP)}(h,h')\rightarrow Hom_{B(vP)}(f_1h,f_1h')$ ($f$ is a ``full functor on 2-cells").

\end{enumerate}

\end{proposition}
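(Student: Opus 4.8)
The plan is to use the standard description of the stabilization. By definition $\mathcal{E}'_I$ is the largest pullback-stable subclass of $\mathcal{E}_I$; a routine check (a pullback of a pullback is a pullback; pullback along the identity recovers the morphism itself) shows that this means $f\in\mathcal{E}'_I$ if and only if \emph{every} pullback of $f$ lies in $\mathcal{E}_I$. By Proposition \ref{proposition:vertical morphisms}, lying in $\mathcal{E}_I$ amounts to $f_0,f_1$ being bijections together with the implication $Hom_{B(vP)}(f_1h,f_1h')\neq\emptyset\Rightarrow Hom_{A(vP)}(h,h')\neq\emptyset$. So condition (1) of the present proposition is common to both classes and causes no trouble: pullbacks in $2Cat$ are computed pointwise (Corollary \ref{corollary:limits 2Cat}) and a pullback of a bijection is a bijection. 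The whole content is therefore to show that pullback-stability of verticality is equivalent to fullness of $f$ on 2-cells, condition (2).

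For the ``if'' direction I would assume (1) and (2), take an arbitrary $g\colon C\to B$, and form the pointwise pullback $P=C\times_B A$ with projection $\pi_C\colon P\to C$. Then $(\pi_C)_0,(\pi_C)_1$ are pullbacks of the bijections $f_0,f_1$, hence bijections. To verify condition (2) of Proposition \ref{proposition:vertical morphisms} for $\pi_C$, take $h=(h_C,h_A)$ and $h'=(h'_C,h'_A)$ in $P(P_1)=C(P_1)\times_{B(P_1)}A(P_1)$ and a 2-cell $\gamma$ of $C$ with vertical domain $h_C$ and vertical codomain $h'_C$. Then $2g_1(\gamma)$ lies in $Hom_{B(vP)}(f_1h_A,f_1h'_A)$, so by fullness of $f$ on 2-cells there is $\alpha\in Hom_{A(vP)}(h_A,h'_A)$ with $2f_1(\alpha)=2g_1(\gamma)$; the pair $(\gamma,\alpha)$ then belongs to $P(2P_1)$ and is a 2-cell of $P$ from $h$ to $h'$. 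Hence $\pi_C\in\mathcal{E}_I$, and since $g$ was arbitrary, $f\in\mathcal{E}'_I$.

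For the ``only if'' direction I would argue by contraposition: assume $f\in\mathcal{E}'_I$, so $f\in\mathcal{E}_I$ and (1) holds, and suppose (2) fails, i.e. there are parallel 1-cells $h,h'$ of $A$ and a 2-cell $\beta\in Hom_{B(vP)}(f_1h,f_1h')$ which is not $2f_1(\alpha)$ for any $\alpha\in Hom_{A(vP)}(h,h')$. Let $\mu\colon T\to B$ be the 2-functor picking out the 2-cell $\beta$ (here $T$ of $(5.3)$ represents the ``set of 2-cells'' functor on $2Cat$: $\mu$ sends the two non-identity 1-cells of $T$, say $t$ and $t'$, to $f_1h$ and $f_1h'$, and the non-identity 2-cell of $T$ to $\beta$). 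Form the pointwise pullback $P=T\times_B A$ with projection $\pi_T\colon P\to T$. Since $f_0,f_1$ are bijections, each object and each 1-cell of $T$ has a unique pre-image under $\pi_T$, so $\pi_T$ is a bijection on $P_0$ and on $P_1$; write $(t,h)$ and $(t',h')$ for the pre-images of $t$ and $t'$. A 2-cell of $P$ with vertical domain $(t,h)$ and vertical codomain $(t',h')$ would project to the non-identity 2-cell $t\Rightarrow t'$ of $T$ and to some $\alpha\in Hom_{A(vP)}(h,h')$ with $2f_1(\alpha)=\beta$, which does not exist. Hence $Hom_{P(vP)}((t,h),(t',h'))=\emptyset$ while $Hom_{T(vP)}(t,t')\neq\emptyset$, so $\pi_T\notin\mathcal{E}_I$ by Proposition \ref{proposition:vertical morphisms}. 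As $\pi_T$ is a pullback of $f$, this contradicts $f\in\mathcal{E}'_I$; therefore (2) must hold.

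The delicate step is this last one, and within it the verification that the pullback along $\mu$ genuinely isolates the missing 2-cell: the bijectivity of $f_0$ and $f_1$ must be used to pin down the fibres of $\pi_T$ over the objects and 1-cells of $T$, so that $(t,h)$ and $(t',h')$ are the only pre-images one can form, and one then observes that any element of $Hom_{P(vP)}((t,h),(t',h'))$ would supply a 2-cell of $A$ from $h$ to $h'$ lying over $\beta$. Checking that $\mu$ is a well-defined 2-functor and that $P$ is again a 2-category (Corollary \ref{corollary:limits 2Cat}) is routine. The whole argument is the two-dimensional analogue of the corresponding computation for the reflection $Cat\to Preord$ carried out in \cite{X:ml}.
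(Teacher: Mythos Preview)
Your proof is correct and follows essentially the same approach as the paper's: both directions hinge on the pointwise computation of pullbacks and on Proposition~\ref{proposition:vertical morphisms}, and the ``only if'' direction is by the same contraposition, pulling back along a $2$-functor that selects a single $2$-cell $\beta$ not in the image of $f$. The only cosmetic differences are that the paper phrases the test object as the sub-$2$-category of $B$ generated by $\theta$ rather than as a $2$-functor $\mu\colon T\to B$, and that for the ``if'' direction the paper invokes the corresponding $Cat\to Preord$ result from \cite{X:ml} on the vertical component $(2f_1,f_1)$ instead of verifying condition~(2) of Proposition~\ref{proposition:vertical morphisms} by hand as you do; your direct check is equivalent and arguably more self-contained.
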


\begin{proof}

As every pullback $g^*(f)=\pi_1:C\times_BA\rightarrow C$ in $2Cat$ of $f$ along any 2-functor $g:C\rightarrow B$ is calculated pointwise, and $(2f_1,f_1):A(vP)\rightarrow B(vP)$ is a stably-vertical functor for the reflection $Cat\rightarrow Preord$, that is, $f_1$ is a bijection and $(2f_1,f_1)$ is a full functor (cf.\ Propositions 4.4 and 3.2 in \cite{X:ml}), then (1) and (2) imply that $g^*(f)$ belongs to $\mathcal{E}_I$ (cf.\ last Proposition \ref{proposition:vertical morphisms}).

Hence, $f\in \mathcal{E}'_I$ if (1) and (2) hold.\\

If $f\in \mathcal{E}'_I$, then $f\in \mathcal{E}_I$ ($\mathcal{E}'_I\subseteq \mathcal{E}_I$), and therefore (1) holds.

Suppose now that (2) does not hold, so that there is $\theta :f_1h\rightarrow f_1h'$ not in the image of $f$, and consider the 2-category $C$ generated by the diagram \begin{picture}(60,30)(0,0)

\put (0,0){$b$}\put (50,0){$b'$}

\put(10,12){\vector(1,0){35}}\put(15,16){$f_1h$}
\put(10,-6){\vector(1,0){35}}\put(15,-16){$f_1h'$}

\put(23,0){$\Downarrow$}\put(32,0){$\theta$}

\end{picture}, and let $g$ be the inclusion of $C$ in $B$. Then, $C\times_BA\cong$ \begin{picture}(60,50)(0,0)

\put (0,0){$b$}\put (50,0){$b'$}

\put(10,12){\vector(1,0){35}}\put(15,16){$f_1(h)$}
\put(10,-6){\vector(1,0){35}}\put(15,-16){$f_1(h')$}

\end{picture}, with no non-identity 2-cells, and so $g^*(f)$ is not in $\mathcal{E}_I$.\\\\

Hence, if $f\in \mathcal{E}'_I$ then (1) and (2) must hold.\end{proof}

It is evident that $\mathcal{E}'_I$ is a proper class of $\mathcal{E}_I$, therefore the monotone-light factorization system $(\mathcal{E}'_I,\mathcal{M}^*_I)$ is non-trivial ($\neq (\mathcal{E}_I,\mathcal{M}_I)$).

\section{Trivial coverings for 2-categories via 2-preorders}
\label{sec:Trivial coverings}

A 2-functor $f:A\rightarrow B$ belongs to the class $\mathcal{M}_I$ of trivial coverings (with respect to the reflection $H\vdash I:2Cat\rightarrow 2Preord$) if and only if the following square

\begin{picture}(100,80)
\put (45,0){$B$}\put (45,50){$A$}\put (140,0){$I(B)$}\put (140,50){$I(A)$}
\put (35,25){$f$}\put (90,60){$\eta_A$}
\put(160,25){$If$}\put(200,25){$(7.1)$}
\put(90,10){$\eta_B$}
\put(70,3){\vector(1,0){58}}\put(70,55){\vector(1,0){58}}
\put(50,45){\vector(0,-1){33}}\put (155,45){\vector(0,-1){33}}
\end{picture}\\

\noindent is a pullback diagram, where $\eta_A$ and $\eta_B$ are unit morphisms for the reflection $H\vdash I:2Cat\rightarrow 2Preord$ (cf.\ \cite[Theorem 4.1]{CHK:fact}).\\

Since the pullback (as any limit) is calculated pointwise in $2Cat$ (cf.\ Corollary \ref{corollary:limits 2Cat}), then $f\in \mathcal{M}_I$ if and only if the following seven squares are pullbacks, corresponding to the seven pointwise components of $\eta_A$ and of $\eta_B$ (cf.\ diagram $(2.1)$):

\begin{picture}(100,80)
\put (37,0){$B(P_i)$}\put (37,50){$A(P_i)$}\put (135,0){$I(B)(P_i)$}\put (135,50){$I(A)(P_i)$}
\put (33,25){$f_{P_i}$}\put (90,60){$\eta_{A(P_i)}$}
\put(160,25){$If_{P_i}$}\put(87,25){($D_i$)}\put(200,25){$(i=0,1,2);$}
\put(90,10){$\eta_{B(P_i)}$}
\put(70,3){\vector(1,0){58}}\put(70,55){\vector(1,0){58}}
\put(50,45){\vector(0,-1){33}}\put (155,45){\vector(0,-1){33}}
\end{picture}

\begin{picture}(400,80)
\put (-4,0){$B(2P_1)$}\put (-4,50){$A(2P_1)$}\put (100,0){$I(B)(2P_1)$}\put (100,50){$I(A)(2P_1)$}
\put (-7,25){$f_{2P_1}$}\put (55,60){$\eta_{A(2P_1)}$}
\put(125,25){$If_{2P_1}$;}\put(52,25){($2D$)}
\put(55,10){$\eta_{B(2P_1)}$}
\put(35,3){\vector(1,0){58}}\put(35,55){\vector(1,0){58}}
\put(15,45){\vector(0,-1){33}}\put (120,45){\vector(0,-1){33}}

\put (196,0){$B(vP_2)$}\put (196,50){$A(vP_2)$}\put (300,0){$I(B)(vP_2)$}\put (300,50){$I(A)(vP_2)$}
\put (193,25){$f_{vP_2}$}\put (255,60){$\eta_{A(vP_2)}$}
\put(325,25){$If_{vP_2}$;}\put(252,25){($vD$)}
\put(255,10){$\eta_{B(vP_2)}$}
\put(235,3){\vector(1,0){58}}\put(235,55){\vector(1,0){58}}
\put(215,45){\vector(0,-1){33}}\put (320,45){\vector(0,-1){33}}

\end{picture}

\begin{picture}(400,80)
\put (-4,0){$B(hP_2)$}\put (-4,50){$A(hP_2)$}\put (100,0){$I(B)(hP_2)$}\put (100,50){$I(A)(hP_2)$}
\put (-7,25){$f_{hP_2}$}\put (55,60){$\eta_{A(hP_2)}$}
\put(125,25){$If_{hP_2}$;}\put(52,25){($hD$)}
\put(55,10){$\eta_{B(hP_2)}$}
\put(35,3){\vector(1,0){58}}\put(35,55){\vector(1,0){58}}
\put(15,45){\vector(0,-1){33}}\put (120,45){\vector(0,-1){33}}

\put (190,0){$B(hvP_2)$}\put (190,50){$A(hvP_2)$}\put (300,0){$I(B)(hvP_2)$.}\put (300,50){$I(A)(hvP_2)$}
\put (189,25){$f_{hvP_2}$}\put (255,60){$\eta_{A(hvP_2)}$}
\put(325,25){$If_{hvP_2}$}\put(252,25){($hvD$)}
\put(255,10){$\eta_{B(hvP_2)}$}
\put(235,3){\vector(1,0){58}}\put(235,55){\vector(1,0){58}}
\put(215,45){\vector(0,-1){33}}\put (320,45){\vector(0,-1){33}}

\end{picture}\\

The three first squares $(D_i)$ $(i=0,1,2)$ are pullbacks since $\eta_{A(P_i)}$ and $\eta_{B(P_i)}$ are identity maps for $i=0,1,2$ (cf.\ diagram $(2.1)$ and the definition of the reflection $H\vdash I:2Cat\rightarrow 2Preord$ in $(5.2)$).\\

Notice that if diagram $(2.1)$ is restricted to the (vertical) precategory diagram $vP$, one obtains from $(7.1)$ the following square in $Cat$, with unit morphisms of the reflection of all categories into preorders $Cat\rightarrow Preord$ (cf.\ \cite{X:ml}),

\begin{picture}(100,80)
\put (35,0){$B(vP)$}\put (35,50){$A(vP)$}\put (140,0){$I(B)(vP)$.}\put (140,50){$I(A)(vP)$}
\put (25,25){$f_{vP}$}\put (90,60){$\eta_{A(vP)}$}
\put(160,25){$If_{vP}$}\put(200,25){$(7.2)$}
\put(90,10){$\eta_{B(vP)}$}
\put(70,3){\vector(1,0){58}}\put(70,55){\vector(1,0){58}}
\put(50,45){\vector(0,-1){33}}\put (155,45){\vector(0,-1){33}}
\end{picture}\\

It is known (cf.\ \cite[Proposition 3.1]{X:ml}) that this square is a pullback in $Cat$ if and only if, for every two objects $h$ and $h'$ in $A(P_1)$ with $Hom_{A(2P_1)}(h,h')$ nonempty, the map
$$Hom_{A(2P_1)}(h,h')\rightarrow Hom_{B(2P_1)}(f_1h,f_1h')$$
induced by $f$ is a bijection.

A necessary condition for the 2-functor $f$ to be a trivial covering was just stated; the following Lemma \ref{lemma:remaining pullbacks} will help to show that this necessary condition is also sufficient in next Proposition \ref{proposition:trivial coverings}.

\begin{lemma}\label{lemma:remaining pullbacks}
Consider the following commutative parallelepiped

\setlength{\unitlength}{0.7mm}
\begin{picture}(100,90)(50,0)

\put(60,0){$B_2$}\put(70,-5){\vector(1,-1){27}}\put(70,-20){$\eta_{B,2}$}
\put(75,5){\vector(1,0){40}}\put(75,0){\vector(1,0){40}}
\put(90,-10){$r^B$}\put(90,10){$q^B$}
\put(120,0){$B_1$}\put(128,-5){\vector(2,-1){48}}\put(162,-20){$\eta_{B,1}$}
\put(135,5){\vector(1,0){40}}\put(135,0){\vector(1,0){40}}
\put(157,-10){$d^B$}\put(157,10){$c^B$}
\put(180,0){$B_0$}\put(190,-2){\vector(2,-1){53}}\put(215,-10){$\eta_{B,0}$}

\put (55,55){$f_2$} \put (128,55){$f_1$}\put
(188,55){$f_0$}\put (65,65){\vector(0,-1){53}}\put
(183,65){\vector(0,-1){53}}\put (123,65){\vector(0,-1){53}}

\put(60,70){$A_2$}\put(70,65){\vector(1,-1){25}}\put(76,45){$\eta_{A,2}$}
\put(75,75){\vector(1,0){40}}\put(75,70){\vector(1,0){40}}
\put(90,60){$r^A$}\put(90,80){$q^A$}
\put(120,70){$A_1$}\put(135,65){\vector(1,-1){25}}\put(162,45){$\eta_{A,1}$}
\put(135,75){\vector(1,0){40}}\put(135,70){\vector(1,0){40}}
\put(157,60){$d^A$}\put(157,80){$c^A$}
\put(180,70){$A_0$}\put(190,65){\vector(2,-1){50}}\put(215,60){$\eta_{A,0}$}

\put(93,-40){$I(B)_2$}
\put(115,-35){\vector(1,0){50}}\put(115,-40){\vector(1,0){50}}
\put(130,-48){$Ir^B$}\put(130,-30){$Iq^B$}
\put(170,-40){$I(B)_1$}
\put(190,-35){\vector(1,0){50}}\put(190,-40){\vector(1,0){50}}
\put(205,-48){$Id^B$}\put(205,-30){$Ic^B$}
\put(245,-40){$I(B)_0$\hspace{10pt,}}

\put(106,-15){$If_2$} \put (180,-15){$If_1$}\put
(250,-15){$If_0$}\put (105,25){\vector(0,-1){53}}\put
(248,25){\vector(0,-1){53}}\put (177,25){\vector(0,-1){53}}

\put(95,30){$I(A)_2$}
\put(115,30){\vector(1,0){50}}\put(115,35){\vector(1,0){50}}
\put(130,23){$Ir^A$}\put(130,40){$Iq^A$}
\put(168,30){$I(A)_1$}
\put(190,35){\vector(1,0){50}}\put(190,30){\vector(1,0){50}}
\put(205,23){$Id^A$}\put(205,40){$Ic^A$}
\put(245,30){$I(A)_0$}\put(260,10){$(7.3)$}
\end{picture}

\vspace{120pt}

\noindent where the five squares $c^Aq^A=d^Ar^A$, $c^Bq^B=d^Br^B$, $Ic^AIq^A=Id^AIr^A$, $If_0\eta_{A,0}=\eta_{B,0}f_0$ and $If_1\eta_{A,1}=\eta_{B,1}f_1$ are pullbacks.

Then, the square $If_2\eta_{A,2}=\eta_{B,2}f_2$ is also a pullback.\footnote{The notation used in diagram $(7.3)$ is arbitrary, being so chosen in order to make the application of Lemma \ref{lemma:remaining pullbacks} in this section more easily understandable.}

\end{lemma}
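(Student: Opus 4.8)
The plan is to verify directly that $A_2$, equipped with the pair $(\eta_{A,2},f_2)$, is a pullback of $I(A)_2\xrightarrow{If_2}I(B)_2\xleftarrow{\eta_{B,2}}B_2$. First note that $(\eta_{A,2},f_2)$ really is a cone over this cospan: the equation $If_2\circ\eta_{A,2}=\eta_{B,2}\circ f_2$ is among the faces of the parallelepiped asserted to commute, so only the universal property is at stake. Two standard facts will be used throughout. The projections of a pullback are jointly monic, so $q^A,r^A$ (by the pullback $c^Aq^A=d^Ar^A$), $q^B,r^B$ (by $c^Bq^B=d^Br^B$), $Iq^A,Ir^A$ (by $Ic^AIq^A=Id^AIr^A$) and $\eta_{A,0},f_0$ (by $If_0\eta_{A,0}=\eta_{B,0}f_0$) are jointly monic pairs; and every face commutes, e.g. $If_1\circ Iq^A=Iq^B\circ If_2$, $\eta_{B,1}\circ q^B=Iq^B\circ\eta_{B,2}$, $\eta_{A,0}\circ c^A=Ic^A\circ\eta_{A,1}$, $c^B\circ f_1=f_0\circ c^A$, together with their $r,d$-analogues.

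Then I would argue as follows. Take an arbitrary $X$ with $u:X\to I(A)_2$ and $v:X\to B_2$ such that $If_2\circ u=\eta_{B,2}\circ v$, and put $u_q=Iq^A u$, $u_r=Ir^A u$, $v_q=q^B v$, $v_r=r^B v$; since $Ic^AIq^A=Id^AIr^A$ and $c^Bq^B=d^Br^B$, one has $Ic^Au_q=Id^Au_r$ and $c^Bv_q=d^Bv_r$. Composing the cone condition on $u,v$ with $Iq^B$ and with $Ir^B$ and using the commuting faces gives $If_1u_q=\eta_{B,1}v_q$ and $If_1u_r=\eta_{B,1}v_r$, so the pullback $If_1\eta_{A,1}=\eta_{B,1}f_1$ (that is, $A_1=I(A)_1\times_{I(B)_1}B_1$) yields unique $w_q,w_r:X\to A_1$ with $\eta_{A,1}w_q=u_q$, $f_1w_q=v_q$, and likewise for $w_r$. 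Next I would check $c^Aw_q=d^Aw_r$: composing each side with $\eta_{A,0}$ reduces (via commutativity) to $Ic^Au_q=Id^Au_r$, composing with $f_0$ reduces to $c^Bv_q=d^Bv_r$, and both hold, so joint monicity of $(\eta_{A,0},f_0)$ gives the equality. The pullback $c^Aq^A=d^Ar^A$ then produces a unique $w:X\to A_2$ with $q^Aw=w_q$, $r^Aw=w_r$. It remains to check $\eta_{A,2}w=u$ (by composing with the jointly monic pair $(Iq^A,Ir^A)$ and reducing to $\eta_{A,1}w_q=u_q$, $\eta_{A,1}w_r=u_r$), $f_2w=v$ (by composing with $(q^B,r^B)$), and uniqueness of $w$ (from the uniqueness of $w_q,w_r$ in the level‑$1$ pullback together with joint monicity of $(q^A,r^A)$). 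This establishes the universal property, so the square $If_2\eta_{A,2}=\eta_{B,2}f_2$ is a pullback.

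The individual verifications are all short diagram chases, so I expect none of them to be difficult in isolation. The one place genuinely needing care — and the reason the ordinary pasting/cancellation lemma for pullbacks cannot simply be invoked — is that $I(B)_2$ is \emph{not} assumed to be the pullback $I(B)_1\times_{I(B)_0}I(B)_1$: the cone condition $If_2u=\eta_{B,2}v$ lives ``over'' $I(B)_2$ and cannot be read off from projections out of it, so it must be transported down to level $1$ along $Iq^B$ and $Ir^B$. That transport is legitimate precisely because $I$ is a functor, so $Iq^B\circ If_2=If_1\circ Iq^A$ (and its $r$‑analogue), and because the units are natural; once it is in hand, the five pullback hypotheses force everything else.
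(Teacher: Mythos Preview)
Your argument is correct and is precisely the ``obvious diagram chase'' that the paper's own proof consists of (the paper gives no further detail beyond that phrase). Your closing observation that $I(B)_2$ is not assumed to be a pullback, so a naive pasting lemma does not apply and the cone must be pushed down to level~$1$ via $Iq^B,Ir^B$, correctly identifies the only point where care is needed.
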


\begin{proof} The proof is obtained by an obvious diagram chase.
\end{proof}

\begin{proposition}\label{proposition:trivial coverings}
A 2-functor $f:A\rightarrow B$ is a trivial covering for the reflection $H\vdash I:2Cat\rightarrow 2Preord$ (in notation, $f\in \mathcal{M}_I$) if and only if, for every two objects $h$ and $h'$ in $A(P_1)$ with $Hom_{A(2P_1)}(h,h')$ nonempty, the map
$$Hom_{A(2P_1)}(h,h')\rightarrow Hom_{B(2P_1)}(f_1h,f_1h')$$
induced by $f$ is a bijection.
\end{proposition}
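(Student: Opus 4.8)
The plan is to establish the equivalence by analyzing the seven pullback squares $(D_0),(D_1),(D_2),(2D),(vD),(hD),(hvD)$ that together characterize membership of $f$ in $\mathcal{M}_I$, as already set up in the text preceding the statement. First I would recall that the three squares $(D_i)$ are automatically pullbacks, since $\eta_{A(P_i)}$ and $\eta_{B(P_i)}$ are identities for $i=0,1,2$; so the content of $f\in\mathcal{M}_I$ reduces to the four squares over $2P_1$, $vP_2$, $hP_2$, $hvP_2$. The square $(vD)$, restricted via the vertical precategory diagram $vP$, is precisely the square $(7.2)$ in $Cat$, which by \cite[Proposition 3.1]{X:ml} is a pullback if and only if the stated bijectivity condition on $Hom_{A(2P_1)}(h,h')\to Hom_{B(2P_1)}(f_1h,f_1h')$ holds (for $h,h'$ with the source hom-set nonempty). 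This gives the ``only if'' direction immediately, and also shows that the stated condition is exactly what makes $(vD)$ a pullback.

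For the ``if'' direction, I would assume the bijectivity condition and deduce that all seven squares are pullbacks. The squares $(D_i)$ are free, and $(vD)$ holds by \cite[Proposition 3.1]{X:ml}; it remains to handle $(2D)$, $(hD)$, $(hvD)$. Here is where Lemma \ref{lemma:remaining pullbacks} does the work: each of the objects $2P_1=A(2P_1)$, $hP_2$, $hvP_2$ sits at the apex of a precategory (indeed category) structure in diagram $(2.1)$, with legs down to lower-index objects whose $\eta$-squares we have already shown to be pullbacks. Concretely, $2P_1$ is, as the apex of the \emph{vertical} precategory diagram $vP$, built from $P_1$ and $P_0$ via a pullback square $Cvc\cdot Cvq = Cvd\cdot Cvr$ in $Set$ (this is $(2.2)$ applied to $vP$), so taking $B_2 = B(vP_2)$, $B_1 = B(P_1)$, $B_0 = B(P_0)$ in the notation of Lemma \ref{lemma:remaining pullbacks}, together with the already-established pullbacks $(D_1)$, $(D_0)$, and the pullback square $(vD)$ itself in the role of ``$Ic^AIq^A = Id^AIr^A$'', we conclude that the $\eta$-square over $vP_2$ is a pullback — wait, that is $(vD)$, which we already have; rather, the point is to get $(2D)$: note $2P_1$ as an \emph{object} is $A(2P_1)$, and it is the ``$P_1$'' of the vertical precategory, whose ``$P_2$'' is $vP_2$. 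So instead I would apply Lemma \ref{lemma:remaining pullbacks} with the horizontal precategory structures: $hP_2$ is the apex of the precategory diagram $hP$ (with $hc\cdot hq = hd\cdot hr$ a pullback), whose objects are $2P_1$ and $P_0$, and $hvP_2$ is the apex of $hvP$, whose objects are $vP_2$ and $P_0$. Thus, having $(2D)$, $(D_0)$ and the category pullback square for $hP$, Lemma \ref{lemma:remaining pullbacks} yields $(hD)$; having $(vD)$, $(D_0)$ and the square for $hvP$, it yields $(hvD)$; and $(2D)$ itself follows from $(D_1)$, $(D_0)$ and the square for $vP$ via the same lemma.

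The main obstacle, and the step deserving care, is verifying that the hypotheses of Lemma \ref{lemma:remaining pullbacks} are genuinely met in each of these three applications — in particular that the relevant $I$-image square ($Ic^A\cdot Iq^A = Id^A\cdot Ir^A$) is a pullback in $Set$, which holds because $I(A)$ and $I(B)$ are again 2-preorders hence 2-categories, so $(2.2)$ applies to them for each precategory row and column of $(2.1)$, and that the $\eta$-squares playing the roles of $If_0\eta_{A,0}=\eta_{B,0}f_0$ and $If_1\eta_{A,1}=\eta_{B,1}f_1$ are the ones we have already certified as pullbacks. Once the bookkeeping of which square plays which role in diagram $(7.3)$ is pinned down (the footnote to Lemma \ref{lemma:remaining pullbacks} warns that its notation was chosen for exactly this purpose), the three applications are routine diagram chases. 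Assembling: all seven squares $(D_0),(D_1),(D_2),(2D),(vD),(hD),(hvD)$ are pullbacks, hence the square $(7.1)$ is a pointwise pullback in $\hat{2\mathbb{P}}$, and since $2Cat$ is closed under limits (Corollary \ref{corollary:limits 2Cat}) it is a pullback in $2Cat$, i.e. $f\in\mathcal{M}_I$. This completes the equivalence.
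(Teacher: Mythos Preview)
Your overall architecture matches the paper's: reduce $f\in\mathcal{M}_I$ to the seven pointwise squares, note $(D_0),(D_1),(D_2)$ are automatic, obtain $(2D)$ and $(vD)$ from the hypothesis, and then use Lemma~\ref{lemma:remaining pullbacks} on the horizontal precategories $hP$ and $hvP$ to get $(hD)$ and $(hvD)$. The ``only if'' direction and the two horizontal applications of the lemma are fine.

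There is, however, a genuine gap in your derivation of $(2D)$. You write that ``$(2D)$ itself follows from $(D_1)$, $(D_0)$ and the square for $vP$ via the same lemma,'' but this misidentifies the roles in the precategory $vP$: its objects are $P_1$, its morphisms are $2P_1$, and its composable pairs are $vP_2$. Lemma~\ref{lemma:remaining pullbacks} produces the pullback at the \emph{composable-pairs} level from those at the object and morphism levels; applied to $vP$ it therefore yields $(vD)$ from $(D_1)$ and $(2D)$, not $(2D)$ from $(D_1)$ and $(D_0)$ (indeed $P_0$ does not occur in $vP$ at all). So your argument for $(2D)$ is circular.

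The fix is already implicit in what you invoke earlier: the square $(7.2)$ is a square in $Cat$, and \cite[Proposition 3.1]{X:ml} says the bijectivity hypothesis is equivalent to $(7.2)$ being a pullback in $Cat$. Since pullbacks in $Cat$ are computed pointwise, this single statement simultaneously gives the pullbacks at the object level $(D_1)$, the morphism level $(2D)$, and the composable-pairs level $(vD)$. Thus the hypothesis yields $(2D)$ and $(vD)$ directly, and Lemma~\ref{lemma:remaining pullbacks} is needed only twice (for $(hD)$ and $(hvD)$), exactly as in the paper's proof. Your confusion seems to stem from conflating the single Set-square $(vD)$ with the Cat-square $(7.2)$; once you keep them distinct, the bookkeeping straightens out.
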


\begin{proof}
In the considerations just above, it was showed that the statement warrants that the squares $(2D)$ and $(vD)$ are pullbacks, adding to the fact that $(D_0)$, $(D_1)$ and $(D_2)$ are all three pullbacks.

Then, $(hD)$ and $(hvD)$ must also be pullbacks according to Lemma \ref{lemma:remaining pullbacks}.
\end{proof}

\section{Coverings for 2-categories via 2-preorders}
\label{sec:Coverings}

A 2-functor $f:A\rightarrow B$ belongs to the class $\mathcal{M}^*_I$ of coverings (with respect to the reflection $H\vdash I:2Cat\rightarrow 2Preord$) if there is some effective descent morphism (also called monadic extension in categorical Galois theory) $p:C\rightarrow B$ in $2Cat$ with codomain $B$ such that the pullback $p^*(f):C\times_BA\rightarrow C$ of $f$ along $p$ is a trivial covering ($p^*(f)\in\mathcal{M}_I$).\\

The following Lemma \ref{lemma:sufficient condition for being covering} can be found in \cite[Lemma 4.2]{X:ml}, in the context of the reflection of categories into preorders, but for 2-categories via 2-preorders the proof is exactly the same, since the same conditions hold (cf.\ Theorem \ref{theorem:stable units} and Example \ref{example:EDM(2Cat)}). The next Proposition \ref{proposition:coverings} characterizes the coverings for 2-categories via 2-preorders.

\begin{lemma}\label{lemma:sufficient condition for being covering}

A 2-functor $f:A\rightarrow B$ in $2Cat$ is a covering (for the reflection $H\vdash I:2Cat\rightarrow 2Preord$) if and only if, for every 2-functor $\varphi :X\rightarrow B$ over $B$ from any 2-preorder $X$, the pullback $X\times_BA$ of $f$ along $\varphi$ is also a 2-preorder.

\end{lemma}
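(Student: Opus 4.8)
The statement to prove is Lemma~\ref{lemma:sufficient condition for being covering}: a 2-functor $f:A\rightarrow B$ is a covering for $H\vdash I:2Cat\rightarrow 2Preord$ if and only if, for every 2-functor $\varphi:X\rightarrow B$ from a 2-preorder $X$, the pullback $X\times_BA$ is again a 2-preorder. The excerpt explicitly tells us that ``the proof is exactly the same'' as \cite[Lemma~4.2]{X:ml}, and that the hypotheses it relies on are precisely the two facts already established here: the reflection has stable units (Theorem~\ref{theorem:stable units}) and for every $2\mathbb{B}\in 2Cat$ there is an e.d.m.\ $2p:2\mathbb{E}\rightarrow 2\mathbb{B}$ with $2\mathbb{E}\in 2Preord$ (Example~\ref{example:EDM(2Cat)}). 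So the plan is to run the standard categorical-Galois argument for characterizing coverings when the subcategory $\mathcal{X}=2Preord$ contains enough effective descent domains.

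First I would prove the \emph{only if} direction. Suppose $f\in\mathcal{M}^*_I$, so there is an e.d.m.\ $p:C\rightarrow B$ with $p^*(f)\in\mathcal{M}_I$ (a trivial covering). Given $\varphi:X\rightarrow B$ with $X$ a 2-preorder, I would form the pullback $X\times_B C$ of $p$ along $\varphi$; since $C$ can be taken in $2Preord$ via Example~\ref{example:EDM(2Cat)} and the relevant pullback-stability of e.d.m.\ holds, the projection $X\times_B C\rightarrow X$ is an e.d.m.\ with a 2-preorder target, hence (by closure of $2Preord$ under limits, Corollary~\ref{corollary:limits 2Cat}) $X\times_B C$ is itself a 2-preorder. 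Now $(X\times_B C)\times_B A\cong (X\times_B C)\times_X (X\times_B A)$, and pulling the trivial covering $p^*(f)$ back along $X\times_B C\rightarrow C$ gives a trivial covering over $X\times_B C$; a trivial covering over a 2-preorder has a 2-preorder as domain (since trivial coverings are stable and $\mathcal{M}_I$-maps into an object of $\mathcal{X}$ land in $\mathcal{X}$ when the reflection has stable units). Thus $(X\times_B C)\times_X(X\times_B A)$ is a 2-preorder, and descending along the e.d.m.\ $X\times_B C\rightarrow X$ (using that $2Preord$ is closed under the descent, i.e.\ the analogue of \cite[Corollary~3.9]{JST:edm} already invoked in the proof of Proposition~\ref{proposition:EDM(2Cat)}) forces $X\times_B A$ to be a 2-preorder.

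For the \emph{if} direction, assume $X\times_B A$ is a 2-preorder for every $\varphi:X\rightarrow B$ with $X\in 2Preord$. By Example~\ref{example:EDM(2Cat)} pick an e.d.m.\ $p:C\rightarrow B$ with $C\in 2Preord$. Applying the hypothesis to $p$ itself, $C\times_B A$ is a 2-preorder, so $p^*(f):C\times_B A\rightarrow C$ is a morphism \emph{inside} $2Preord$. Since the reflection has stable units, every morphism of $2Preord$ is a trivial covering for $H\vdash I$ (the unit squares over objects of the subcategory are pullbacks — this is exactly where stable units, as opposed to mere semi-left-exactness, is used; see \cite[\S3]{CJKP:stab} and \cite{CHK:fact}). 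Hence $p^*(f)\in\mathcal{M}_I$, so $f\in\mathcal{M}^*_I$ by definition.

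The main obstacle is bookkeeping rather than conceptual: one must be careful that the effective descent morphisms supplied by Example~\ref{example:EDM(2Cat)} remain effective descent after pullback along an arbitrary $\varphi$ (needed in the \emph{only if} direction), and that ``$\mathcal{M}_I$-morphism with target in $2Preord$ has domain in $2Preord$'' genuinely follows from stable units — both are standard in categorical Galois theory but rely on the specific structure of this reflection; since the ground-structure results of \cite{X:GenConComp} and the descent machinery of \cite{JST:edm,CJKP:stab} have already been set up, I would simply cite them exactly as the analogous \cite[Lemma~4.2]{X:ml} does, rather than re-deriving anything.
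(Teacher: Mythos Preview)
Your plan is correct and in fact provides more detail than the paper itself, which simply records that ``the proof is exactly the same'' as \cite[Lemma~4.2]{X:ml}, since stable units (Theorem~\ref{theorem:stable units}) and the existence of an e.d.m.\ with 2-preorder domain over every object (Example~\ref{example:EDM(2Cat)}) are already in hand. Your \emph{if} direction is clean; the only quibble is that a morphism between 2-preorders being a trivial covering needs nothing beyond fullness of $H$ (the units $\eta_P,\eta_Q$ are isomorphisms, so the square $(7.1)$ is trivially a pullback) --- this is not where stable units enters.

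In the \emph{only if} direction two of your justifications are mis-attributed, though the conclusions hold. First, $X\times_B C\in 2Preord$ is not an instance of ``closure of $2Preord$ under limits'': the apex $B$ need not lie in $2Preord$, so Corollary~\ref{corollary:limits 2Cat} does not apply. The correct (and easier) reason is the pointwise description of pullbacks --- joint monicity of $(vd,vc)$ passes to any subobject of $X\times C$. Second, your final ``descent'' step is not an instance of \cite[Corollary~3.9]{JST:edm}. The actual mechanism is that stable units makes $I$ preserve the pullback over $X\in 2Preord$, so $q^*(\eta_{X\times_B A})$ becomes the unit of an object already in $2Preord$ and hence an isomorphism; monadicity of $q^*$ then reflects this isomorphism, giving $X\times_B A\in 2Preord$. (Alternatively, and more directly: under stable units every unit $\eta_W$ lies in $\mathcal{E}'_I$, so writing $\varphi^*(f)\in\mathcal{M}^*_I$ as $W\xrightarrow{\eta_W}I(W)\to X$ with the second arrow in $2Preord\subseteq\mathcal{M}_I\subseteq\mathcal{M}^*_I$ and invoking uniqueness of $(\mathcal{E}'_I,\mathcal{M}^*_I)$-factorizations forces $\eta_W$ to be an isomorphism --- this route also sidesteps the need to replace the arbitrary $C$ by one in $2Preord$, a replacement you assert but which itself requires the CJKP machinery.)
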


\begin{proposition}\label{proposition:coverings}

A 2-functor $f:A\rightarrow B$ in $2Cat$ is a covering (for the reflection $H\vdash I:2Cat\rightarrow 2Preord$) if and only if it is faithful vertically with respect to 2-cells, that is, for every pair of morphisms $g$ and $g'$, the map $$Hom_{A(2P_1)}(g,g')\rightarrow Hom_{B(2P_1)}(f_1g,f_1g')$$ induced by $f$ is an injection.
\end{proposition}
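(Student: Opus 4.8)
The plan is to use Lemma~\ref{lemma:sufficient condition for being covering} as the working criterion: $f$ is a covering if and only if, for every 2-functor $\varphi:X\to B$ from a 2-preorder $X$, the pullback $X\times_B A$ is again a 2-preorder. Since pullbacks in $2Cat$ are computed pointwise (Corollary~\ref{corollary:limits 2Cat}), being a 2-preorder is the condition that $C(vd)$ and $C(vc)$ are jointly monic on the pullback object, i.e. that for any parallel pair of 1-cells in $X\times_B A$ there is at most one 2-cell between them. So the whole statement reduces to a pointwise/hom-set computation about the vertical 2-cell structure.

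First I would prove the ``only if'' direction by contraposition. Suppose $f$ is not faithful vertically on 2-cells, so there exist parallel 1-cells $g,g'$ in $A$ and two distinct 2-cells $\theta_1\neq\theta_2$ in $\mathrm{Hom}_{A(2P_1)}(g,g')$ with $f(\theta_1)=f(\theta_2)=:\psi$ in $B$. Then I would build a 2-preorder $X$ — take $X$ to be the image under $f$ of the relevant data, namely the 2-preorder generated by two objects, the two 1-cells $f_1g, f_1g'$, and the single 2-cell $\psi$ (this is a 2-preorder precisely because it has only one 2-cell in that hom-slot), with $\varphi:X\to B$ the evident inclusion. Computing $X\times_B A$ pointwise, the fibre over $\psi$ contains both $(\psi,\theta_1)$ and $(\psi,\theta_2)$, two distinct 2-cells between the same parallel pair of 1-cells, so $X\times_B A$ is not a 2-preorder; by Lemma~\ref{lemma:sufficient condition for being covering}, $f$ is not a covering.

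Next, the ``if'' direction. Assume the hom-set maps $\mathrm{Hom}_{A(2P_1)}(g,g')\to\mathrm{Hom}_{B(2P_1)}(f_1g,f_1g')$ are all injective, and let $\varphi:X\to B$ be arbitrary with $X$ a 2-preorder. I want to show $X\times_B A$ is a 2-preorder, i.e. that for any two parallel 1-cells in $X\times_B A$ there is at most one vertical 2-cell between them. A 1-cell of $X\times_B A$ is a pair $(x,a)$ with $\varphi(x)=f(a)$, and a parallel pair $(x,a),(x',a')$; a 2-cell between them is a pair $(\xi,\theta)$ with $\xi\in\mathrm{Hom}_{X(2P_1)}(x,x')$, $\theta\in\mathrm{Hom}_{A(2P_1)}(a,a')$, and $\varphi(\xi)=f(\theta)$. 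Since $X$ is a 2-preorder, $\xi$ is uniquely determined (at most one choice); and given that unique $\xi$, the constraint $f(\theta)=\varphi(\xi)$ together with injectivity of $f$ on $\mathrm{Hom}_{A(2P_1)}(a,a')$ forces $\theta$ to be unique as well. Hence there is at most one 2-cell $(\xi,\theta)$, so $X\times_B A$ is a 2-preorder, and Lemma~\ref{lemma:sufficient condition for being covering} gives that $f$ is a covering. One routine check to dispatch is that $X\times_B A$ genuinely lies in $2Cat$ (not just $\hat{2\mathbb{P}}$), but this is immediate from Corollary~\ref{corollary:limits 2Cat} since $2Cat$ is closed under limits in the presheaf category.

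The only real subtlety — and the step I would treat most carefully — is the ``only if'' direction: one must make sure the test 2-preorder $X$ is chosen so that the pullback actually detects the failure of faithfulness, i.e. that the two distinct 2-cells $\theta_1,\theta_2$ lying over the \emph{same} $\psi$ really do produce two distinct 2-cells $(\psi,\theta_1),(\psi,\theta_2)$ in the pointwise-computed pullback with the same (vertical) source and target. This is where the hypothesis that $\theta_1,\theta_2$ are \emph{vertically} parallel (same $vd$ and $vc$) is used, matching the ``vertically with respect to 2-cells'' qualifier in the statement; the horizontal structure plays no role and the horizontal components of the pullback impose no obstruction. Everything else is a pointwise bookkeeping argument of the same flavour as Propositions~\ref{proposition:vertical morphisms} and~\ref{proposition:stably-vertical}.
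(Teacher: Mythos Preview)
Your proposal is correct and follows essentially the same approach as the paper: both directions are handled via Lemma~\ref{lemma:sufficient condition for being covering}, with the ``only if'' direction using a test 2-preorder of the shape $T$ (your $X$ is exactly the paper's $T$ mapped into $B$ via the common image $\psi$), and the ``if'' direction being the pointwise hom-set argument you spell out. The paper's proof is considerably terser than yours but the logical structure is identical.
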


\begin{proof}
Consider again the 2-preorder $T$ generated by the diagram \begin{picture}(60,40)(0,0)

\put (0,0){$a$}\put (50,0){$a'$.}

\put(10,12){\vector(1,0){35}}\put(25,16){$h$}
\put(10,-6){\vector(1,0){35}}\put(25,-16){$h'$}

\put(23,0){$\Downarrow$}\put(32,0){$\leq$}

\end{picture}\\

If $f$ is not faithful vertically with respect to 2-cells, then, by including $T$ in $B$, one could obtain a pullback $T\times_BA$ that is not a preorder.

Therefore, $f$ is not a covering, by the previous Lemma \ref{lemma:sufficient condition for being covering}.\\

Reciprocally, consider any 2-functor $\varphi :X\rightarrow B$ such that $X$ is a 2-preorder.

If $f$ is faithful (vertically with respect to 2-cells), then the pullback $X\times_BA$ is a 2-preorder, given the nature of $X$. Hence, $f$ is a covering, by the previous Lemma \ref{lemma:sufficient condition for being covering}.

\end{proof}

\section*{Acknowledgement}
This work was supported by
 The Center for Research and Development in Mathematics and Applications (CIDMA) through the Portuguese Foundation for Science and Technology

(FCT - Funda\c{c}\~ao para a Ci\^encia e a Tecnologia),

references UIDB/04106/2020 and UIDP/04106/2020.

\end{document}